\title{\bf Convergence Analysis of the Frank-Wolfe Algorithm and Its Generalization in Banach Spaces}
\author{
Hong-Kun Xu \\
{\small Department of Mathematics}\\
{\small Hangzhou Dianzi University } \\
{\small Hangzhou 310018} \\
{\small China} \\
{\small E-mail: xuhk@hdu.edu.cn}}
\date{}
 \newtheorem{theorem}{Theorem}[section]
 \newtheorem{cor}[theorem]{Corollary}
 \newtheorem{lemma}[theorem]{Lemma}
 \newtheorem{prop}[theorem]{Proposition}
 \theoremstyle{definition}
 \newtheorem{definition}[theorem]{Definition}
 \theoremstyle{remark}
 \newtheorem{remark}[theorem]{Remark}
 \theoremstyle{eg}
 \newtheorem{eg}[theorem]{Example}
 \theoremstyle{fact}
\numberwithin{equation}{section}
\begin{document}
\maketitle
\begin{abstract}

The Frank-Wolfe algorithm, a very first optimization method and
also known as the conditional gradient method, was introduced by Frank and Wolfe
in 1956. Due to its simple linear subproblems, the Frank-Wolfe algorithm has recently been
received much attention for solving large-scale structured optimization problems arising from
many applied areas such as signal processing and machine learning.
In this paper we will discuss in detail the convergence analysis of the Frank-Wolfe algorithm
 in Banach spaces.
Two ways of the selections of the stepsizes are discussed: the line minimization search method and the open loop rule.
In both cases, we prove the convergence of the Frank-Wolfe algorithm
in the case where the objective function $f$ has uniformly continuous (on bounded sets)
Fr\'echet derivative $f'$. We introduce the notion of the curvature constant of order $\sigma\in (1,2]$ and
obtain the rate $O(\frac{1}{k^{\sigma-1}})$ of convergence of the Frank-Wolfe algorithm.
In particular, this rate reduces to
$O(\frac{1}{k^{\nu}})$ if $f'$ is $\nu$-H\"older continuous for $\nu\in (0,1]$, and to
$O(\frac{1}{k})$ if $f'$ is Lipschitz continuous.
A generalized Frank-Wolfe algorithm
is also introduced to address the problem of minimizing a composite objective function.
Convergence of iterates of both Frank-Wolfe and generalized Frank-Wolfe algorithms are investigated. \\
\\

\noindent{Keywords:} Frank-Wolfe algorithm, convergence, rate of convergence, H\"older continuity,
curvature constant, line minimization search method,
open loop rule, composite objective. \\

\noindent{Mathematics Subject Classification:} 90C25,  65K05, 49M37.

\end{abstract}


\section{Introduction}

The Frank-Wolfe algorithm (FWA) \cite{Frank-Wolfe1956}, a very first optimization method and also known as
the conditional gradient method \cite{Polyak1987}, was introduced by Frank and Wolfe  in 1956.
Consider a constrained convex minimization problem of the form:
\begin{equation}\label{min:0:f}
\min_{x\in C}  f(x),
\end{equation}
where $C$ is a nonempty compact convex subset of the Euclidean $d$-space $\mathbb{R}^d$
(with inner product $\langle\cdot,\cdot\rangle$ and norm $\|\cdot\|_2$) and
$f: \mathbb{R}^d\to \mathbb{R}$ is a differentiable, convex function.

Starting with an initial guess $x_0\in C$, FWA generates a sequence $\{x_k\}$
through the iteration process:
\begin{subequations} \label{al:FWA:0}
  \begin{numcases}{\hbox{}}
\label{FWA:0a}
 \bar{x}_k=\arg\min_{x\in C}\langle \nabla f(x_k),x\rangle,  \\
 \label{FWA:0b}
  x_{k+1}=x_k+\frac{2}{k+2}(\bar{x}_k-x_k).
 \end{numcases}
\end{subequations}
[Here $\nabla f$ is the gradient mapping of $f$.]
The idea of FWA is to approximate the objective function $f$ at the $k$th iterate $x_k$ by
its first-order expansion (i.e., linearization of $f$ at $x_k$) to get an intermediate point $\bar{x}_k$
via a linear minimization (\ref{FWA:0a})
in order to define the next iterate $x_{k+1}$ via a convex combination (\ref{FWA:0b}).
It is proved that $f(x_k)-f(x^*)\le O(\frac1{k})$ if $\nabla f$ is Lipschitz continuous, where
$x^*\in C$ is an optimal solution of (\ref{min:0:f}).

The gradient-projection algorithm (GPA) can also solve the minimization problem (\ref{min:0:f}).
GPA generates a sequence $\{x_k\}$ by the iteration process (\cite{Polyak1987,Xu2011}):
\begin{equation}\label{alg:pga}
x_{k+1}=P_C(x_k-\gamma_k\nabla f(x_k)),\quad k\ge 0,
\end{equation}
where $x_0\in C$ and $\{\gamma_k\}$ is a sequence of step-lengths.
Here $P_C$ is the projection operator onto $C$, that is,
$$P_Cx=\arg\min\{\|x-z\|_2: z\in C\},\quad x\in \mathbb{R}^d.$$

Therefore, FWA provides a projection-free algorithm for solving constrained optimization problems of
form (\ref{min:0:f}). Another feature of FWA is its simple linear subproblems, which is quite helpful in solving many large-scaled
optimization problems arising from applied areas such as signal/imaging processing and machine learning.
These make FWA revived recently in the study of optimization theory and methods \cite{Freund-Grigas2016,Ja2013,JH2014,JS2010}.
Early applications of FWA in the transportation theory may be found in \cite{Fu1984,V1987} and a recent decentralization
of FWA in network optimization may be found in \cite{WLSM2017}.

Now consider the constrained minimization problem (\ref{min:0:f}) in a Banach space $X$ with norm $\|\cdot\|$
and dual space $X^*$, and
$C$ a closed bounded convex subset of $X$. We point out that GPA (\ref{alg:pga}) is hardly extendable to
the Banach space framework since, on the one hand, the gradient of $f$, $\nabla f$, depends on the duality map
 $J: X\to X^*$ which is defined as
$$J(x)=\{x^*\in X^*: x^*(x)=\|x\|^2=\|x^*\|^2\},\quad x\in X.$$
Indeed, from Phelps \cite{Phelps1985},
$$\nabla f(x)=J^{-1}(f'(x)),\quad x\in X,$$
where $f'(x)$ is the Fr\'echet derivative of $f$ at $x$. Note that
the duality map $J$ is, in general, set-valued, and single-valued if and only if
the space $X$ is smooth (see \cite{Ci1990} for more connections of duality maps
with topological and geometrical properties of Banach spaces).

On the other hand, projections are not always well defined in a general Banach space.

In contrast with GPA, FWA has the advantage of involving with neither projections, nor duality maps. Therefore,
FWA can work in the Banach space setting.

This paper is aimed at studying the convergence and rate of convergence of FWA in a general Banach space $X$ for
solving the minimization problem (\ref{min:0:f}) and also the composite minimization problem
\begin{equation}\label{min:composite}
\min_{x\in C} \varphi(x):=f(x)+g(x),
\end{equation}
where $f, g\in\Gamma_0(X)$ are proper, lower semicontinuous, convex functions.

The main contributions of this paper are twofold:
\begin{itemize}
\item
Convergence of FWA for the minimization problems (\ref{min:0:f}) and (\ref{min:composite})
under two ways of selecting the stepsizes: line minimization search and open loop rule.
In this regard we assume that the Fr\'echet derivative $f'$ of $f$ be uniformly continuous over $C$,
which is weaker than the assumption in the literature that $f'$ be Lipschitz continuous.
\item
Rates of convergence of FWA under the above-mentioned two ways of choosing the stepsizes.
In this regard, we introduce the concept of curvature constant of order $\sigma\in (1,2]$ which
extends the notion of curvature constant \cite{JH2014} and which makes us able to obtain the
$O\left(\frac{1}{k^{\nu}}\right)$ rate of convergence of FWA in the case that $f'$ is $\nu$-H\"older
continuous, which is more general than the case of $f'$ being Lipschitz continuous in the literature.
\end{itemize}

The structure of the paper is as follows. In the next section we collect general notion and facts
of Fr\'echet derivatives, Lipschitz and H\"older continuity, and modulus of continuity.
We also include two lemmas which are main tools in proving convergence and rate of convergence
of the Frank-Wolfe algorithm and its generalization in subsequent sections.
In section \ref{sec:FWA}, we discuss convergence of FWA, including convergence of iterates
generated by FWA. In section \ref{sec:FWA-rate} we introduce the notion of constant curvature of
order $\sigma\in (1,2]$ which makes us able to obtain the convergence rate of FWA in the case
where the derivative $f'$ of $f$ is H\"older continuous (instead of Lipschitz continuous as
popularly used in current literature). This seems to be the first time in literature.
Section \ref{sec:gFWA} is devoted to an extension of FWA, known as generalized FWA, for solving composite
optimization problems of form (\ref{min:composite}). Many results of Sections \ref{sec:FWA}
and \ref{sec:FWA-rate} for FWA are extended to the generalized FWA for (\ref{min:composite}).
Finally, a summary of the results obtained in this paper is given in Section \ref{sec:conclusion}.

\section{Preliminaries}

Let $X$ be a Banach space with norm $\|\cdot\|$. Denote by $X^*$ the dual of $X$ and by
$\langle\cdot,\cdot\rangle$ the pairing between $X^*$ and $X$. Namely,
$$\langle x^*,x\rangle=x^*(x),\quad x^*\in X^*,\   x\in X.$$

A functional $f: X\to\mathbb{R}$ is said to be Fr\'echet differentiable at a point $x\in X$ if there
exists an element in $X^*$, denoted $f'(x)$, with the property
$$\lim_{u\to 0}\frac{f(x+u)-f(x)-\langle f'(x),u\rangle}{\|u\|}=0.$$
We say that $f$ is  Fr\'echet differentiable (on $X$) if $f$ is  Fr\'echet differentiable at every point $x\in X$.

Recall that a function $f: X\to \mathbb{R}$ is said to be
\begin{itemize}
\item
$L$-Lipschitz continuous for some $L>0$ if
$\|f(x)-f(y)\|\le L\|x-y\|$ for all $x,y\in X$;

\item
$\nu$-H\"older continuous for some $\nu\in (0,1]$ if there exists a constant $L_\nu>0$ such that
$\|f(x)-f(y)\|\le L_\nu\|x-y\|^\nu$ for all $x,y\in X$.
\end{itemize}
 For instance, if we define a function $h$ on $\ell^2$
by $h(x)=\|x\|_2^\sigma$ for $x\in \ell^2$ and $\sigma\in (1,2]$,
then the gradient of $h$, $\nabla h(x)=\sigma \|x\|_2^{\sigma-2}x$, is $(\sigma-1)$-H\"older continuous.

\begin{definition}
Let $X, Y$ be real Banach spaces and let $C$ be a nonempty subset of $X$.
The modulus of continuity of a function $f: C\to Y$ is defined by
\begin{equation*}
\omega(f,\tau):=\sup\{\|f(x_1)-f(x_2)\|_Y: x_1, x_2\in C,\   \|x_1-x_2\|\le\tau\}, \  \tau>0.
\end{equation*}
It is easily seen that $\omega(f,\tau)$ is a nondecreasing function of $\tau>0$. Moreover,
$f$ is uniformly continuous over $C$ if and only if $\lim_{\tau\to 0^+}\omega(f,\tau)=0$.
\end{definition}

The following result is straightforward and known.

\begin{prop}\label{prop:Holder}
Suppose $f$ is $\nu$-H\"older continuous for some $0<\nu\le 1$, namely,
\begin{equation}\label{holder}
\|f(x_1)-f(x_2)\|\le L_\nu\|x_1-x_2\|^\nu, \quad x_1, x_2\in C.
\end{equation}
Then $\omega(f,\tau)\le L_\nu\tau^\nu$ for $\tau>0$.
In particular, when $f$ is $L$-Lipschtz, namely,
\begin{equation}\label{LiL}
\|f(x_1)-f(x_2)\|\le L\|x_1-x_2\|, \quad x_1, x_2\in C,
\end{equation}
then $\omega(f,\tau)\le L\tau$ for $\tau>0$.
\end{prop}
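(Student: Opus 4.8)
The plan is to unwind the definition of the modulus of continuity and apply the H\"older estimate pointwise, then pass to the supremum. First I would fix $\tau>0$ and take an arbitrary admissible pair $x_1,x_2\in C$, that is, a pair satisfying the constraint $\|x_1-x_2\|\le\tau$ appearing in the supremum that defines $\omega(f,\tau)$. For any such pair the H\"older hypothesis \eqref{holder} gives
\begin{equation*}
\|f(x_1)-f(x_2)\|\le L_\nu\|x_1-x_2\|^\nu .
\end{equation*}

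The only non-tautological ingredient is the monotonicity of the map $t\mapsto t^\nu$ on $[0,\infty)$, which holds because $\nu>0$. Combined with the admissibility constraint $\|x_1-x_2\|\le\tau$ this yields $\|x_1-x_2\|^\nu\le\tau^\nu$, and hence $\|f(x_1)-f(x_2)\|\le L_\nu\tau^\nu$. Since this bound $L_\nu\tau^\nu$ is a single constant that does not depend on the chosen pair and dominates $\|f(x_1)-f(x_2)\|$ uniformly over all admissible pairs, it is an upper bound for the set whose supremum is $\omega(f,\tau)$; therefore taking the supremum preserves the inequality and gives $\omega(f,\tau)\le L_\nu\tau^\nu$, as claimed.

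The Lipschitz assertion then requires no separate argument: it is exactly the specialization $\nu=1$, $L_\nu=L$ of what has just been proved, so \eqref{LiL} immediately yields $\omega(f,\tau)\le L\tau$. I do not expect a genuine obstacle in this proof; the single point meriting a word of care is the elementary fact that the supremum of a family of real numbers, each bounded above by a common constant, is itself bounded above by that constant, which is what legitimizes passing from the pointwise estimate to the bound on $\omega(f,\tau)$.
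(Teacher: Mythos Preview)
Your proof is correct; the paper itself omits the argument entirely, stating only that the result is ``straightforward and known,'' so your direct unwinding of the definition is exactly the intended verification.
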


To discuss the convergence of the FWA, we need the following lemma.

\begin{lemma} \cite{Xu2002}\label{le:convergence-tool}
Suppose a sequence $\{\alpha_k\}_{k=0}^\infty$ of nonnegative real numbers
satisfies the condition:
\begin{equation*}
\alpha_{k+1}\le (1-\eta_k)\alpha_k+\eta_k\varepsilon_k,\quad k\ge 0,
\end{equation*}
where $\{\eta_k\}$ and $\{\tau_k\}$ are sequences of nonnegative real numbers such that
\begin{enumerate}
\item[(a)] $\lim_{k\to\infty}\eta_k=0$;
\item[(b)] $\sum_{k=0}^\infty \eta_k=\infty$;
\item[(c)] $\lim_{k\to\infty}\varepsilon_k=0$.
\end{enumerate}
Then $\lim_{k\to\infty}\alpha_k=0$.
\end{lemma}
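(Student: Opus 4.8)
The plan is to show that $\limsup_{k\to\infty}\alpha_k\le\varepsilon$ for every $\varepsilon>0$, which together with the standing assumption $\alpha_k\ge 0$ forces $\lim_{k\to\infty}\alpha_k=0$. First I would fix $\varepsilon>0$ and use hypotheses (a) and (c) to locate an index $N$ beyond which both the error term and the stepsize are controlled: by (c) we may take $\varepsilon_k\le\varepsilon$ for all $k\ge N$, and by (a) we may enlarge $N$ so that $\eta_k\le 1$, hence $1-\eta_k\in[0,1]$, for all $k\ge N$. This last point is precisely where condition (a) is needed: it guarantees that the factors $1-\eta_k$ are nonnegative, so that the recursion can be iterated without reversing the direction of the inequality.

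The next step is a change of variable that turns the affine recursion into a purely multiplicative one. For $k\ge N$, inserting $\varepsilon_k\le\varepsilon$ into the hypothesis gives $\alpha_{k+1}\le(1-\eta_k)\alpha_k+\eta_k\varepsilon$, which rearranges exactly to
$$\alpha_{k+1}-\varepsilon\le(1-\eta_k)(\alpha_k-\varepsilon).$$
Iterating this from $N$ up to $k$, and using $1-\eta_j\ge 0$ to preserve the inequality at each multiplication, I would obtain
$$\alpha_{k+1}-\varepsilon\le\Bigl[\prod_{j=N}^{k}(1-\eta_j)\Bigr](\alpha_N-\varepsilon).$$

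To finish I would show the product tends to $0$. Applying the elementary bound $1-t\le e^{-t}$ (valid for $t\in[0,1]$) factor by factor gives $\prod_{j=N}^{k}(1-\eta_j)\le\exp\bigl(-\sum_{j=N}^{k}\eta_j\bigr)$, and since the tail series $\sum_{j=N}^{\infty}\eta_j$ diverges by (b), the right-hand side goes to $0$ as $k\to\infty$. Taking $\limsup$ in the displayed product inequality then yields $\limsup_{k}\alpha_k\le\varepsilon$: if $\alpha_N-\varepsilon\le 0$ the conclusion is immediate because the product is nonnegative, while if $\alpha_N-\varepsilon>0$ the product-to-zero estimate does the job. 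Letting $\varepsilon\downarrow 0$ completes the argument.

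I expect the only genuine care to lie in the telescoping step and the $\limsup$ conclusion, specifically in keeping the signs straight so that multiplying through by the nonnegative factors $1-\eta_j$ preserves the inequality, and in splitting into the two cases according to the sign of $\alpha_N-\varepsilon$. The product estimate itself is routine once (b) is invoked, so there is no deep obstacle here, which is consistent with this being a standard convergence tool taken from \cite{Xu2002}.
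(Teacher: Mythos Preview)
Your argument is correct and is the standard proof of this well-known convergence tool: the shift $\alpha_k\mapsto\alpha_k-\varepsilon$ reduces the affine recursion to a multiplicative one, and the product $\prod(1-\eta_j)$ is driven to zero via $1-t\le e^{-t}$ together with hypothesis~(b). The paper itself does not supply a proof of this lemma---it merely cites \cite{Xu2002}---so there is nothing to compare against; your write-up would serve perfectly well as a self-contained justification, and your care with the sign of $\alpha_N-\varepsilon$ and the role of condition~(a) in securing $1-\eta_k\ge 0$ is exactly right.
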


To obtain rate of convergence of FWA, we need the lemma below.

\begin{lemma} \cite[Lemma 6, page 46]{Polyak1987}\label{le:2:ineq}
Let $\{\alpha_k\}$ be a sequence of nonnegative real number satisfying the condition:
\begin{equation*}
\alpha_{k+1}\le\alpha_k-\beta_k\alpha_k^{1+\eta},\quad k\ge 0,
\end{equation*}
where $\beta_k\ge 0$ for all $k$, and $\eta>0$ is a constant. Then
$$\alpha_k\le\alpha_0\left(1+\eta\alpha_0^{\eta}\sum_{i=0}^{k-1}\beta_i\right)^{-\frac{1}{\eta}},\quad k\ge 1.$$
In particular,
\begin{itemize}
\item
if $\beta_k\equiv \beta$ for all $k$, then
\begin{equation*}
\alpha_{k}\le\frac{\alpha_0}{\left(1+\eta\alpha_0^{\eta}{\beta} k\right)^{\frac{1}{\eta}}},\quad k\ge 0;
\end{equation*}
\item
if $\beta_k\equiv \beta$ for all $k$ and $\eta=1$
(i.e., $\alpha_{k+1}\le\alpha_k-\beta\alpha_k^{2}$ for all $k$), then
$$\alpha_k\le \frac{\alpha_0}{1+\alpha_0{\beta}k},\quad k\ge 0.$$
\end{itemize}
\end{lemma}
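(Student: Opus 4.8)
The plan is to reduce the recursion to a \emph{linear} telescoping inequality by means of the change of variables $\gamma_k := \alpha_k^{-\eta}$, the point being that the map $t \mapsto t^{-\eta}$ converts the multiplicative contraction in the hypothesis into an additive gain that can be summed directly.

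First I would dispose of the degenerate case. If $\alpha_{k_0} = 0$ for some index $k_0$, then the hypothesis gives $\alpha_{k_0+1} \le -\beta_{k_0}\cdot 0 = 0$, hence $\alpha_{k_0+1} = 0$ by nonnegativity, and inductively $\alpha_k = 0$ for all $k \ge k_0$; since the right-hand side of the asserted bound is always nonnegative, the conclusion holds trivially from $k_0$ onward. Thus I may assume $\alpha_k > 0$ for every $k$. Rewriting the hypothesis as $\alpha_{k+1} \le \alpha_k(1 - \beta_k\alpha_k^{\eta})$ and using $\alpha_{k+1} \ge 0$ together with $\alpha_k > 0$ forces $\beta_k\alpha_k^\eta \le 1$, so the factor $1 - \beta_k\alpha_k^\eta$ lies in $[0,1]$.

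Next I would raise both sides to the power $-\eta$. Because $t \mapsto t^{-\eta}$ is decreasing on $(0,\infty)$, the inequality reverses, giving $\gamma_{k+1} = \alpha_{k+1}^{-\eta} \ge \alpha_k^{-\eta}(1-\beta_k\alpha_k^\eta)^{-\eta}$. The key elementary estimate is Bernoulli's inequality in the form $(1-x)^{-\eta} \ge 1 + \eta x$ for $x \in [0,1)$, which follows from the convexity of $x \mapsto (1-x)^{-\eta}$ (its graph lies above the tangent line at $x = 0$). Applying this with $x = \beta_k\alpha_k^\eta$ yields $\gamma_{k+1} \ge \alpha_k^{-\eta}(1 + \eta\beta_k\alpha_k^\eta) = \gamma_k + \eta\beta_k$. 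This is the crux: the hard part is precisely engineering this one-step additive gain, after which everything is bookkeeping.

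Finally I would telescope. Summing $\gamma_{k+1} - \gamma_k \ge \eta\beta_k$ from $0$ to $k-1$ gives $\gamma_k \ge \gamma_0 + \eta\sum_{i=0}^{k-1}\beta_i$, that is, $\alpha_k^{-\eta} \ge \alpha_0^{-\eta}\big(1 + \eta\alpha_0^\eta\sum_{i=0}^{k-1}\beta_i\big)$. Raising to the power $-1/\eta$ (again decreasing, so the inequality reverses) produces the claimed bound $\alpha_k \le \alpha_0\big(1 + \eta\alpha_0^\eta\sum_{i=0}^{k-1}\beta_i\big)^{-1/\eta}$. The two special cases are then immediate: with $\beta_k \equiv \beta$ the sum is $\beta k$, and setting additionally $\eta = 1$ collapses the exponent. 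The only point requiring care throughout is the boundary value $\beta_k\alpha_k^\eta = 1$, where $\alpha_{k+1}$ may vanish and $(1-x)^{-\eta}$ blows up; but this is exactly the degenerate situation already isolated at the outset, so it causes no difficulty.
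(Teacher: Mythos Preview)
Your argument is correct: the substitution $\gamma_k=\alpha_k^{-\eta}$ together with the Bernoulli-type estimate $(1-x)^{-\eta}\ge 1+\eta x$ reduces the recursion to an additive telescoping inequality, and you have handled the degenerate case $\alpha_{k_0}=0$ and the boundary $\beta_k\alpha_k^\eta=1$ carefully. Note, however, that the paper does not supply its own proof of this lemma at all---it is simply quoted from Polyak's book \cite[Lemma~6, p.~46]{Polyak1987}---so there is nothing in the paper to compare your proof against; your argument is in fact the standard one found in that reference.
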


\section{Convergence of the Frank-Wolfe Algorithm}
\label{sec:FWA}

Consider the minimization problem
\begin{equation}\label{min:f}
\min_{x\in C} f(x),
\end{equation}
where $C$ is a nonempty, closed, convex, bounded subset of a Banach space $X$,
and $f: X\to\mathbb{R}$ is a continuously Fr\'echet differentiable, convex function.
Assume (\ref{min:f}) has a nonempty set of solutions which is denoted by $S$.

Recall that the Frank-Wolfe algorithm (FWA)
generates a sequence $\{x_k\}$ by a two-stage iteration process as follows:
\begin{subequations} \label{al:FWA:1}
  \begin{numcases}{\hbox{}}
\label{FWA:1a}
 \bar{x}_k=\arg\min_{x\in C}\langle f'(x_k),x\rangle,  \\
 \label{FWA:1b}
  x_{k+1}=x_k+\gamma_k(\bar{x}_k-x_k).
 \end{numcases}
\end{subequations}
Here $\gamma_k\in [0,1)$ is the stepsize at the $k$th iteration.

\begin{remark}
FWA can be viewed as a fixed point algorithm. As a matter of fact, we have $x_{k+1}\in (1-\gamma_k)x_k+\gamma_kTx_k$, where
the (possibly set-valued) mapping $T$ is defined by
$$Tx:=\{z\in C: \langle f'(x),z\rangle=\inf_{w\in C} \langle f'(x),w\rangle\},\quad x\in C.$$
It is easily seen that $x\in C$ is a solution of (\ref{min:f}) if and only if $x\in C$ is a
fixed point of $T$, that is, $x\in Tx$.

Moreover, in order that the constrained linear minimization (\ref{FWA:1a}) be solvable for each $k$,
the set $C$ is required to be weakly compact. As a matter of fact,  since for each fixed $u^*\in X^*$, the linear
function $x\mapsto \langle u^*,x\rangle$ is weakly continuous, weak compactness of $C$ sufficiently implies that
the minimization $\min\{\langle u^*,x\rangle: x\in C\}$ has solutions.

Therefore, in what follows we actually implicitly assume that $C$ is  weakly compact convex, in particular,
$X$ is reflexive and $C$ is closed  bounded convex.

\end{remark}

The convergence of FWA (\ref{al:FWA:1}) depends on the choice of the stepsizes $\{\gamma_k\}$.
We will discuss in detail two ways of choosing the stepsizes $\{\gamma_k\}$: Line minimization search and open loop rule.

\subsection{Stepsizes by Line Minimization Search}

There are different ways of selecting the stepsizes
$\{\gamma_k\}$, one of which is the following one-dimensional line minimization search method:
\begin{equation}\label{gamma:1}
\gamma_k=\arg\min_{0\le \gamma\le 1}f(x_k+\gamma(\bar{x}_k-x_k)).
\end{equation}
Note that the first-order approximation at $x_k$ to $f$ is the linear function:
$$f_k(x):=f(x_k)+\langle f'(x_k),x-x_k\rangle.$$
An equivalent definition of $\bar{x}_k$ is thus given by
$\bar{x}_k=\arg\min_{x\in C} f_k(x).$
Note also that $f'_k(x)=f'(x_k)$ for all $x$. Consequently, another equivalent
condition for $\bar{x}_k$ is the variational inequality (VI):
\begin{equation}\label{vi:1}
\langle f'(x_k),x-\bar{x}_k\rangle\ge 0,\quad x\in C.
\end{equation}

The result below was proved in Polyak \cite{Polyak1987} in a Hilbert space and under the condition
that the Fr\'echet derivative $f'$ of $f$ is Lipschitz continuous on $C$.
Here we prove, in a different argument from Polyak's, the same result
in a Banach space and under the weaker condition that $f'$ be uniformly continuous on $C$.

\begin{theorem}\label{th:FWA-convergence1}
Let $C$ be a closed bounded convex subset of a real Banach space $X$
and let $f: X\to\mathbb{R}$ be a differentiable convex function such that
the Fr\'echet derivative $f'$ is uniformly continuous on $C$.
Let $\{x_k\}$ be generated by FWA (\ref{al:FWA:1}),
where the sequence of stepsizes, $\{\gamma_k\}$, is selected by the
line minimization search method (\ref{gamma:1}). Then
\begin{itemize}
\item[(i)]
$f(x_{k+1})\le f(x_k)$ for all $k$, and
\item[(ii)]
$\lim_{k\to\infty} f(x_k)=f^*$.
\end{itemize}

\end{theorem}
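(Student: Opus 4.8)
The plan is to treat the two parts separately: part~(i) is essentially immediate, and part~(ii) rests on a single one-step descent estimate driven by the uniform continuity of $f'$.

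For part~(i) I would exploit that $\gamma_k$ in (\ref{gamma:1}) minimizes over the whole interval $[0,1]$. Since $\gamma=0$ is admissible and returns $x_k$, the defining property of the line search gives $f(x_{k+1})=f\big(x_k+\gamma_k(\bar x_k-x_k)\big)\le f(x_k)$. Thus $\{f(x_k)\}$ is nonincreasing; being bounded below by $f^*:=\min_{x\in C}f(x)$ it converges to some $\bar f\ge f^*$, and part~(ii) amounts to showing $\bar f=f^*$.

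The engine for part~(ii) is the following descent inequality. Writing $d_k:=\bar x_k-x_k$ and applying the fundamental theorem of calculus to the real function $t\mapsto f(x_k+t\gamma d_k)$ (legitimate since $f$ is Fr\'echet differentiable and, by convexity of $C$, every $x_k+t\gamma d_k$ with $t,\gamma\in[0,1]$ lies in $C$), I would obtain, for each $\gamma\in[0,1]$,
$$f(x_k+\gamma d_k)-f(x_k)=\gamma\langle f'(x_k),d_k\rangle+\gamma\int_0^1\langle f'(x_k+t\gamma d_k)-f'(x_k),d_k\rangle\,dt.$$
The first term equals $-\gamma g_k$ with $g_k:=\langle f'(x_k),x_k-\bar x_k\rangle\ge0$, the Frank--Wolfe gap (nonnegativity coming from the minimality of $\bar x_k$). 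For the remainder, $\|(x_k+t\gamma d_k)-x_k\|=t\gamma\|d_k\|\le\gamma D$ where $D:=\operatorname{diam}(C)<\infty$, so by monotonicity of the modulus $\|f'(x_k+t\gamma d_k)-f'(x_k)\|\le\omega(f',\gamma D)$, whence the integral is bounded by $\gamma D\,\omega(f',\gamma D)$. Together with the line-search comparison from part~(i), this gives, for all $\gamma\in[0,1]$,
$$f(x_{k+1})\le f(x_k)-\gamma g_k+\gamma D\,\omega(f',\gamma D).$$
To finish I would dominate the suboptimality by the gap. Fixing $x^*\in S$, convexity of $f$ yields $\delta_k:=f(x_k)-f^*\le\langle f'(x_k),x_k-x^*\rangle$, while the variational inequality (\ref{vi:1}) at $x=x^*$ gives $\langle f'(x_k),\bar x_k\rangle\le\langle f'(x_k),x^*\rangle$; hence $g_k\ge\langle f'(x_k),x_k-x^*\rangle\ge\delta_k$. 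Substituting $g_k\ge\delta_k$ into the descent inequality leaves $\delta_{k+1}\le(1-\gamma)\delta_k+\gamma D\,\omega(f',\gamma D)$ for every $\gamma\in[0,1]$. I would then specialize to the open-loop value $\gamma=\tfrac{2}{k+2}$ and invoke Lemma~\ref{le:convergence-tool} with $\eta_k=\tfrac{2}{k+2}$ and $\varepsilon_k=D\,\omega(f',\tfrac{2D}{k+2})$; conditions (a),(b) are the usual harmonic facts, and (c) is $\omega(f',\tau)\to0$ as $\tau\to0^+$, i.e.\ exactly the uniform continuity of $f'$ on $C$. The lemma forces $\delta_k\to0$, that is $f(x_k)\to f^*$.

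The one delicate point I anticipate is the descent estimate itself: one must check that the whole segment from $x_k$ to $x_k+\gamma d_k$ stays in $C$ so that the modulus $\omega(f',\cdot)$, defined relative to $C$, may be applied, and that the remainder is controlled uniformly in $k$ by a quantity vanishing as $k\to\infty$. Both points reduce to boundedness of $C$ (finiteness of $D$) together with the monotonicity and vanishing-at-$0$ of $\omega$; crucially, no Lipschitz hypothesis is used, which is where the argument improves on Polyak's. The rest is bookkeeping.
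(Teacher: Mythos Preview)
Your proposal is correct and follows essentially the same route as the paper: both compare $f(x_{k+1})$ against $f(x_k+\gamma(\bar x_k-x_k))$ for an auxiliary step $\gamma\in(0,1]$, expand via the fundamental theorem of calculus, bound the remainder using uniform continuity of $f'$ and the diameter of $C$, combine this with the gap inequality $\langle f'(x_k),x_k-\bar x_k\rangle\ge f(x_k)-f^*$, and then invoke Lemma~\ref{le:convergence-tool}. The only cosmetic differences are that you phrase the remainder bound through the modulus $\omega(f',\cdot)$ and pick the concrete sequence $\gamma=\tfrac{2}{k+2}$, whereas the paper uses a generic null sequence $\{\tau_k\}$ with $\sum\tau_k=\infty$ and an ad hoc $\varepsilon_k$.
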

\begin{proof}
Put $\theta_k=f(x_k)-f^*$ for $k\ge 0$, and define a function $g_k(\gamma)$ by
$$g_k(\gamma)=f(x_k+\gamma(\bar{x}_k-x_k)),\quad 0\le \gamma\le 1.$$
Then $f(x_{k+1})=\min\{g_k(\gamma): 0\le \gamma\le 1\}\le g_k(0)=f(x_k)$.
This proves (i).

To see (ii), we take a null sequence $\{\tau_k\}$ in $(0,1)$ such that $\sum_{k=0}^\infty\tau_k=\infty$
to deduce that
\begin{align}\label{fx0}
f(x_{k+1})&\le g_k(\tau_k)=f(x_k+\tau_k(\bar{x}_k-x_k)) \nonumber \\
&=f(x_k)+\int_0^1 \langle f'(x_k+t\tau_k(\bar{x}_k-x_k)),\tau_k(\bar{x}_k-x_k)\rangle dt \nonumber \\
&=f(x_k)+\tau_k\langle f'(x_k),\bar{x}_k-x_k\rangle \nonumber \\
&\quad +\tau_k\int_0^1 \langle f'(x_k+t\tau_k(\bar{x}_k-x_k))-f'(x_k),\bar{x}_k-x_k\rangle dt.
\end{align}
Set $\delta={\rm diam}(C)$ and
$\varepsilon_k=\delta\cdot\sup_{0\le t\le 1}\|f'(x_k+t\tau_k(\bar{x}_k-x_k))-f'(x_k)\|$.
Then (\ref{fx0}) is reduced to the inequality
\begin{equation}\label{fx1}
f(x_{k+1})\le f(x_k)+\tau_k\langle f'(x_k),\bar{x}_k-x_k\rangle+\tau_k\varepsilon_k.
\end{equation}
On the other hand, the convexity of $f$ implies that, for any $x\in C$,
$$f(x)\ge f(x_k)+\langle f'(x_k),x-x_k\rangle\ge f(x_k)+\langle f'(x_k),\bar{x}_k-x_k\rangle.$$
Consequently,
\begin{equation}\label{f'xk0}
\langle f'(x_k),\bar{x}_k-x_k\rangle\le f^*-f(x_k)=-\theta_k.
\end{equation}
Substituting (\ref{f'xk0}) into (\ref{fx1}), we get
\begin{equation}\label{fx2}
\theta_{k+1}\le (1-\tau_k)\theta_k+\tau_k\varepsilon_k.
\end{equation}
Since $f'$ is uniformly continuous over $C$ and since $\|t\tau_k(\bar{x}_k-x_k)\|\le\delta\tau_k\to 0$
as $k\to\infty$, we obtain $\varepsilon_k\to 0$ as $k\to\infty$.

Now applying Lemma \ref{le:convergence-tool} to (\ref{fx2}),
we conclude that $\theta_k\to 0$ as $k\to\infty$.
\end{proof}

\subsection{Stepsizes by Open Loop Rule}

The open loop rule was introduced in \cite{DH1978} to study convergence of FWA.
This rule means that the sequence $\{\gamma_k\}$ of stepsizes satisfies the following
two conditions:
\begin{enumerate}
\item[(C1)]
$\lim_{k\to\infty}\gamma_k=0$,
\item[(C2)] $\sum_{k=0}^\infty\gamma_k=\infty$.
\end{enumerate}

\begin{theorem}\label{th:open-loop-1}
Let $C$ be a nonempty closed bounded convex subset of a real Banach space $X$, let $f: X\to\mathbb{R}$
be a convex function with a uniformly continuous Fr\'echet derivative $f'$ over $C$, and let
$\{x_k\}$ be generated by the Frank-Wolfe algorithm (\ref{al:FWA:1}).
Suppose $\{\gamma_k\}\subset (0,1]$ satisfies the open loop rule (C1)-(C2).
Then $\lim_{k\to\infty}f(x_k)=\inf_C f$.
\end{theorem}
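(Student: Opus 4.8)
The plan is to run essentially the same argument as in the proof of Theorem \ref{th:FWA-convergence1}, with one crucial simplification: the auxiliary null sequence $\{\tau_k\}$ used there is now replaced by the prescribed stepsizes $\{\gamma_k\}$, whose defining open loop conditions (C1)--(C2) are precisely the hypotheses needed to invoke Lemma \ref{le:convergence-tool}. Writing $f^*=\inf_C f$ and $\theta_k=f(x_k)-f^*$, I would first note that $x_{k+1}=(1-\gamma_k)x_k+\gamma_k\bar x_k\in C$ by convexity of $C$ and $\gamma_k\in(0,1]$, and then expand $f(x_{k+1})=f(x_k+\gamma_k(\bar x_k-x_k))$ by the fundamental theorem of calculus to obtain
\[
f(x_{k+1})=f(x_k)+\gamma_k\langle f'(x_k),\bar x_k-x_k\rangle+\gamma_k\int_0^1\langle f'(x_k+t\gamma_k(\bar x_k-x_k))-f'(x_k),\bar x_k-x_k\rangle\,dt.
\]

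Next, setting $\delta={\rm diam}(C)$ and $\varepsilon_k=\delta\cdot\sup_{0\le t\le 1}\|f'(x_k+t\gamma_k(\bar x_k-x_k))-f'(x_k)\|$, the integral term is bounded in absolute value by $\gamma_k\varepsilon_k$, giving
\[
f(x_{k+1})\le f(x_k)+\gamma_k\langle f'(x_k),\bar x_k-x_k\rangle+\gamma_k\varepsilon_k.
\]
The convexity of $f$ together with the minimizing property of $\bar x_k$ in (\ref{FWA:1a}) yields, exactly as before, $\langle f'(x_k),\bar x_k-x_k\rangle\le f^*-f(x_k)=-\theta_k$, whence the basic recursion
\[
\theta_{k+1}\le(1-\gamma_k)\theta_k+\gamma_k\varepsilon_k.
\]

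It then remains to check the hypotheses of Lemma \ref{le:convergence-tool} with $\eta_k=\gamma_k$ and the error sequence $\varepsilon_k$. Conditions (a) and (b) of that lemma are immediate from the open loop rule (C1)--(C2). For condition (c), observe that $\|t\gamma_k(\bar x_k-x_k)\|\le\delta\gamma_k\to 0$ by (C1); since the intermediate points $x_k+t\gamma_k(\bar x_k-x_k)$ all lie in $C$ (again by convexity), the uniform continuity of $f'$ over $C$ forces $\varepsilon_k\to 0$. Applying the lemma yields $\theta_k\to 0$, that is, $f(x_k)\to f^*=\inf_C f$, as claimed.

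I anticipate no serious obstacle, as the computation is a direct transcription of the line minimization proof. The only conceptual point worth flagging is a contrast with the earlier theorem: in the line minimization case one is free to insert any divergent null sequence $\{\tau_k\}$ and then exploit $f(x_{k+1})\le g_k(\tau_k)$, whereas here one is locked into the externally prescribed $\{\gamma_k\}$. What makes the argument go through cleanly is precisely that the open loop conditions (C1)--(C2) coincide with the ``vanishing but non-summable'' requirements demanded by Lemma \ref{le:convergence-tool}; thus the same recursion is driven to zero without any freedom in choosing the weights.
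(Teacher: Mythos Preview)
Your proposal is correct and follows essentially the same approach as the paper's own proof: expand $f(x_{k+1})$ via the integral form of the mean value theorem, bound the remainder by $\gamma_k\varepsilon_k$ with $\varepsilon_k\to 0$ by uniform continuity of $f'$, combine with the convexity estimate $\langle f'(x_k),\bar x_k-x_k\rangle\le -\theta_k$ to obtain $\theta_{k+1}\le(1-\gamma_k)\theta_k+\gamma_k\varepsilon_k$, and conclude via Lemma~\ref{le:convergence-tool}. Your write-up is in fact slightly more careful than the paper's in explicitly noting that the intermediate points remain in $C$ and in verifying each hypothesis of the lemma.
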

\begin{proof}
Recall that we have
$$x_{k+1}=x_k+\gamma_k(\bar{x}_k-x_k),\quad \bar{x}_k=\arg\min_{x\in C}\langle f'(x_k),x\rangle.$$
Put again
$\theta_k=f(x_k)-f^*$.
We now have
\begin{align}\label{fxk+1}
f(x_{k+1})&=f(x_k)+\int_0^1 \langle f'(x_k+t(x_{k+1}-x_k)),x_{k+1}-x_k\rangle dt \nonumber \\
&=f(x_k)+\gamma_k\langle f'(x_k),\bar{x}_k-x_k\rangle \nonumber \\
&\quad +\gamma_k\int_0^1 \langle f'(x_k+t\gamma_k(\bar{x}_{k}-x_k))-f'(x_k),\bar{x}_{k}-x_k\rangle dt.
\end{align}

Since $f$ is convex, we get, for any $x\in C$,
$$f(x)\ge f(x_k)+\langle f'(x_k),x-x_k\rangle\ge f(x_k)+\langle f'(x_k),\bar{x}_k-x_k\rangle.$$
It turns out that
\begin{equation}\label{f'xk}
\langle f'(x_k),\bar{x}_k-x_k\rangle\le f^*-f(x_k)=-\theta_k.
\end{equation}
On the other hand, since $\|x_k+t\gamma_k(\bar{x}_{k}-x_k)-x_k\|\le \gamma_k\delta\to 0$,
the uniform continuity of $f'(x)$ over $x\in C$ results that
\begin{align}\label{int}
&\int_0^1 \langle f'(x_k+t\gamma_k(\bar{x}_{k}-x_k))-f'(x_k),\bar{x}_{k}-x_k\rangle dt \nonumber\\
&\qquad\le \delta\cdot\sup_{0\le t\le 1} \|f'(x_k+t\gamma_k(\bar{x}_{k}-x_k))-f'(x_k)\|=:\varepsilon_k\to 0
\end{align}
since $\|t\gamma_k(\bar{x}_{k}-x_k)\|\le\gamma_k\delta\to 0$.

Substituting (\ref{f'xk}) and (\ref{int}) into (\ref{fxk+1}) yields
\begin{equation}\label{theta}
\theta_{k+1}\le (1-\gamma_k)\theta_k+\gamma_k\varepsilon_k.
\end{equation}
Now applying Lemma \ref{le:convergence-tool}, we obtain $\theta_k\to 0$.
\end{proof}

\begin{remark}
In \cite[Theorem 1]{DH1978}, Dunn and Harshbarger assumed that $\{\gamma_k\}\subset (0,1]$ satisfies the conditions:\\

(DH1)  $\gamma_k\le \frac{\alpha}{k}$ for some constant $\alpha>0$ and all $k\ge 1$, and

(DH2)  $1-\gamma_{k+1}=\frac{\gamma_{k+1}}{\gamma_k}$ or equivalently,
$\gamma_{k+1}=\frac{\gamma_{k}}{1+\gamma_k}$ for all $k\ge 0$.\\

It is not hard to see that conditions (DH1)-(DH2) imply (C1)-(C2). In fact, by induction, it is easy to see
$\gamma_k\ge\frac{\gamma_0}{k+1}$ for all $k\ge 0$. This is trivial for $k=0$.
Suppose this is true for some $k>0$. Then, we infer that (noting $\gamma_0\le 1$)
$$\gamma_{k+1}=\frac{\gamma_k}{1+\gamma_k}\ge\frac{\frac{\gamma_0}{1+k}}{1+\frac{\gamma_0}{1+k}}
=\frac{\gamma_0}{1+k+\gamma_0}\ge\frac{\gamma_0}{2+k}.$$
Consequently, $\sum_{k=0}^\infty\gamma_k=\infty$.

We also find that (DH2) implies (DH1). Indeed, (DH2) implies
\begin{equation}\label{eq:gammaD2}
\gamma_k\le\frac{1}{k+\frac1{\gamma_0}}=\frac{\gamma_0}{\gamma_0 k+1},\quad k\ge 0.
\end{equation}
It is clear that (\ref{eq:gammaD2}) holds when $k=0$.
Assume (\ref{eq:gammaD2}) holds for some $k>0$. We then get by (DH2)
\begin{align*}
\gamma_{k+1}&=\frac{\gamma_{k}}{1+\gamma_k}\\
&\le \frac{\frac{\gamma_0}{\gamma_0 k+1}}{1+\frac{\gamma_0}{\gamma_0 k+1}}
=\frac{\gamma_0}{\gamma_0 k+1+\gamma_0}\\
&=\frac{\gamma_0}{\gamma_0 (k+1)+1}=\frac{1}{k+1+\frac1{\gamma_0}}.
\end{align*}
Therefore, (DH1) holds for all $k\ge 1$ with $\alpha=1$.
\end{remark}

\begin{remark}
Theorems \ref{th:FWA-convergence1}and \ref{th:open-loop-1} show that in a finite-dimensional space,
FWA \eqref{al:FWA:1} converges under the condition that the gradient $\nabla f$ of $f$ is continuous on $C$.
This is sharp in the sense that FWA (\ref{al:FWA:1}) may fail to converge if $\nabla f$ is discontinuous,
as shown by the following example of Nesterov.
\begin{eg}
\cite[Example 1]{Nesterov2017} Consider $X=\mathbb{R}^2$ equipped with the Euclidean norm $\|\cdot\|_2$,
$C=\{x=(x^{(1)},x^{(2)})^\top\in\mathbb{R}^2: (x^{(1)})^2+(x^{(2)})^2\le 1\}$ is the closed unit disc, and
$f(x)=\max\{x^{(1)},x^{(2)}\}$ for $x\in\mathbb{R}^2$.
Then $f$ is nondifferentiable for $x^{(1)}=x^{(2)}$, and differentiable for $x^{(1)}\not=x^{(2)}$ with
$\nabla f(x)=(0,1)^\top$ if $x^{(1)}<x^{(2)}$, and $(1,0)^\top$ if $x^{(1)}>x^{(2)}$.
It is also not hard to find that the unique minimizer of $f$ over $C$ is
$x^*=-(\frac{1}{\sqrt{2}},\frac{1}{\sqrt{2}})^\top$. Moreover, starting with any initial
$x_0\not=x^*$, the sequence $\{x_k\}$ generated by FWA  (\ref{al:FWA:1}) fully lies in the triangle
with vertices $\{x_0, (-1,0)^\top, (0,-1)^\top\}$. It turns out that $\{f(x_k)\}$ fails to converge to
the optimal value of $f$ over $C$.
\end{eg}
\end{remark}

\subsection{Convergence of Iterates}

We now discuss the convergence of the iterates $\{x_k\}$ generated by the FWA (\ref{al:FWA:1}).
We will assume that the space $X$ is reflexive so that every bounded convex subset of $X$
is weakly compact. Recall that a function $h$ is said to be uniformly convex if there exists a
continuous function $\delta: [0,\infty)\to [0,\infty)$,
$\delta(0)=0$ and $\delta(t)>0$ for all $t>0$, such that
\begin{equation}\label{eq:h}
h(\lambda x+(1-\lambda)y)\le \lambda h(x)+(1-\lambda)h(y)-\lambda(1-\lambda)\delta(\|x-y\|)
\end{equation}
for all $\lambda\in (0,1)$ and $x,y\in X$. We will call $\delta$ a modulus of convexity of $h$.
In particular, when $\delta(t)=ct^2$ for some constant $c>0$, $h$ is called strongly convex.

Theorems \ref{th:FWA-convergence1} and \ref{th:open-loop-1} imply that each weak cluster point
$x^*$ of the iterates $\{x_k\}$ is an optimal solution of (\ref{min:f}).
An interesting and natural question is whether the full sequence $\{x_k\}$ converges weakly.
The following result is a partial answer to this question.

\begin{theorem}\label{th:FWA-iterates}
Let $X$ be a reflexive Banach space and consider
the FWA (\ref{al:FWA:1}) with the stepsize sequence $\{\gamma_k\}$ selected by either the line minimization
search method in Theorem \ref{th:FWA-convergence1} or the
open loop rule (C1)-(C2) of Theorem \ref{th:open-loop-1}.
\begin{enumerate}
\item[{\rm (i)}]
If $f$ is strictly convex, then  $\{x_k\}$ converges weakly to the unique solution of (\ref{min:f}).
\item[{\rm (ii)}]
If $f$ is uniformly convex, then $\{x_k\}$ converges in norm to the unique solution of (\ref{min:f}).
\item[{\rm (iii)}]
If $f$ has a sharp minimum point $x^*$, then $\{x_k\}$ converges in norm to $x^*$ at a finite termination.
\item[{\rm (iv)}]
If $C$ is compact in the norm topology, if the stepsizes $\{\gamma_k\}$ is selected by the open loop rule,
and if $\{x_k\}$ has at most finitely many cluster points, then
$\{x_k\}$ converges in norm to a solution of (\ref{min:f}).
\end{enumerate}

\end{theorem}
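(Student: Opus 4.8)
The plan is to leverage the value convergence $f(x_k)\to f^*$ already secured in Theorems \ref{th:FWA-convergence1} and \ref{th:open-loop-1} (valid under either stepsize rule) and, in each of the four cases, to upgrade it to information about the iterates. For (i), since $X$ is reflexive and $\{x_k\}\subset C$ is bounded, every subsequence admits a weakly convergent sub-subsequence. As $f$ is convex and continuous it is weakly lower semicontinuous, so any weak cluster point $\bar x$ obeys $f(\bar x)\le\liminf_j f(x_{k_j})=f^*$, i.e.\ $\bar x\in S$. Strict convexity forces $S=\{x^*\}$, so every weak cluster point equals $x^*$; a bounded sequence in a reflexive space all of whose weak cluster points coincide must converge weakly to that common point, giving $x_k\rightharpoonup x^*$.

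For (ii) I would argue directly rather than through (i). Uniform convexity implies strict convexity, so again $S=\{x^*\}$. Applying the defining inequality (\ref{eq:h}) with $\lambda=\tfrac12$, $x=x_k$, $y=x^*$, and using $f(\tfrac12(x_k+x^*))\ge f^*$, one obtains $\tfrac14\delta(\|x_k-x^*\|)\le\tfrac12(f(x_k)-f^*)=\tfrac12\theta_k\to0$. Since $\{x_k\}\subset C$ is bounded, $\|x_k-x^*\|$ lies in a compact interval $[0,M]$; if $\|x_k-x^*\|\not\to0$, a subsequence would satisfy $\|x_{k_j}-x^*\|\in[\varepsilon,M]$, on which the continuous, strictly positive $\delta$ attains a positive minimum, contradicting $\delta(\|x_k-x^*\|)\to0$. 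Hence $x_k\to x^*$ in norm.

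For (iii) I take a sharp (strongly unique) minimum to mean there is $\mu>0$ with $\langle f'(x^*),x-x^*\rangle\ge\mu\|x-x^*\|$ for all $x\in C$; by convexity this yields the growth estimate $f(x)\ge f^*+\mu\|x-x^*\|$. Combined with $\theta_k\to0$ this gives $\mu\|x_k-x^*\|\le\theta_k\to0$, so $x_k\to x^*$ in norm and, by continuity of $f'$, $f'(x_k)\to f'(x^*)$. I then claim $\bar x_k=x^*$ for all large $k$: decomposing $\langle f'(x^*),\bar x_k-x^*\rangle=\langle f'(x_k),\bar x_k-x^*\rangle+\langle f'(x^*)-f'(x_k),\bar x_k-x^*\rangle$ and using $\langle f'(x_k),\bar x_k-x^*\rangle\le0$ (optimality of $\bar x_k$) together with the sharpness bound forces, whenever $\bar x_k\ne x^*$, the inequality $\mu\le\|f'(x^*)-f'(x_k)\|$, which is impossible once $\|f'(x^*)-f'(x_k)\|<\mu$. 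Thus $\bar x_k=x^*$ eventually, and for the line-search rule (\ref{gamma:1}) the minimum of $\gamma\mapsto f(x_k+\gamma(x^*-x_k))$ on $[0,1]$ is attained at $\gamma=1$ (since $x^*$ minimizes $f$ over $C$), so $x_{k+1}=x^*$ and the iteration terminates finitely. (Under the open-loop rule the same growth estimate still delivers $x_k\to x^*$ in norm, but not finite termination.)

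For (iv) the open-loop rule yields $\|x_{k+1}-x_k\|=\gamma_k\|\bar x_k-x_k\|\le\gamma_k\,\mathrm{diam}(C)\to0$, so consecutive iterates become arbitrarily close. Norm-compactness of $C$ places every cluster point in $S$, and by hypothesis there are finitely many, say $y_1,\dots,y_m$, pairwise separated by some $2\rho>0$. For any $\varepsilon\in(0,\rho)$ all but finitely many $x_k$ lie in $\bigcup_i B(y_i,\varepsilon)$, for otherwise a subsequence avoiding this union would have a cluster point outside $\{y_i\}$; these balls are pairwise at distance at least $2\rho-2\varepsilon>0$. Once $\|x_{k+1}-x_k\|$ drops below this gap the sequence can no longer pass between distinct balls, so it is eventually confined to a single $B(y_i,\varepsilon)$, making $y_i$ its only cluster point and forcing $x_k\to y_i\in S$. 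I expect the main obstacle to be precisely this last step: rigorously combining the vanishing-increment property with the separation of the finite cluster set to exclude oscillation between minimizers; part (iii) is delicate mainly in identifying the correct notion of sharp minimum that makes $\bar x_k=x^*$ hold eventually and thereby drives the finite termination.
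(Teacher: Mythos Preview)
Your argument is correct in all four parts and matches the paper's approach in (i), (iii), and (iv); in particular your contradiction argument for (iv) (vanishing increments plus separation of finitely many cluster points) and your derivation of $\bar x_k=x^*$ in (iii) are essentially identical to the paper's, and your remark that finite termination requires the line-search rule is a caveat the paper leaves implicit.

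The one genuine difference is in (ii). The paper first invokes (i) to get $x_k\rightharpoonup x^*$, then passes from the definition (\ref{eq:h}) of uniform convexity to the first-order inequality
\[
f(x)\ge f(y)+\langle f'(y),x-y\rangle+\delta(\|x-y\|),
\]
applies it with $x=x_k$, $y=x^*$, and uses weak convergence to kill the linear term $\langle f'(x^*),x_k-x^*\rangle$. Your route bypasses both the weak convergence and the passage to the subgradient inequality: plugging $\lambda=\tfrac12$ directly into (\ref{eq:h}) and using $f(\tfrac12(x_k+x^*))\ge f^*$ gives $\tfrac14\delta(\|x_k-x^*\|)\le\tfrac12\theta_k\to0$ immediately. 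This is shorter and more elementary (it does not use differentiability of $f$ or the conclusion of (i)); the paper's approach, on the other hand, isolates the useful inequality (\ref{eq:3:f3}), which it reuses later for the composite problem.
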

\begin{proof}
(i)
In this case, $f$ has a unique minimum in $C$ which we denote by $x^*$.
By Theorems \ref{th:FWA-convergence1} and \ref{th:open-loop-1}, we know that every weak
cluster point of $\{x_k\}$  is a minimum of $f$. By uniqueness of minimum of $f$,
we find that $\{x_k\}$ has one (note that $C$ is weakly compact) and only one weak
cluster point, hence, must be convergent weakly to $x^*$.

(ii)
First observe by (i) that $\{x_k\}$ is weakly convergent to the unique solution $x^*$ of (\ref{min:f}).
Now let $\delta$ be a modulus of convexity of $f$ (i.e., Eq. (\ref{eq:h}) holds for $f$).
It turns out that
$$\frac{f(y+\lambda(x-y))-f(y)}{\lambda}\le f(x)-f(y)-(1-\lambda)\delta(\|x-y\|).$$
Letting $\lambda\to 0$ yields
\begin{equation}\label{eq:3:f3}
f(x)\ge f(y)+\langle f'(y),x-y\rangle+\delta(\|x-y\|)
\end{equation}
for all $x,y\in X$. In particular, taking $x:=x_k$ and $y:=x^*\in S$ implies that
\begin{equation}\label{eq:3:converge}
f(x_k)\ge f(x^*)+\langle f'(x^*),x_k-x^*\rangle+\delta(\|x_k-x^*\|).
\end{equation}
Since $f(x_k)\to f(x^*)$ and $x_k\to x^*$ weakly, taking the limit in (\ref{eq:3:converge}) as $k\to\infty$,
we immediately get $\delta(\|x_k-x^*\|)\to 0$. Consequently, $x_k\to x^*$ in norm.

(iii)
Recall that the definition of $f$ having a sharp minimum point $x^*$
means that there exists $\alpha>0$ such that \cite[page 136]{Polyak1987}
\begin{equation}\label{sharp}
f(x)\ge f(x^*)+\alpha\|x-x^*\|
\end{equation}
for all $x\in C$. It then turns out that
$$\|x_k-x^*\|\le\frac{1}{\alpha}[f(x_k)-f^*]\to 0.$$
That is, $x_k\to x^*$ in norm; hence, $\|f'(x_k)-f'(x^*)\|_*\to 0$ as well.

Observe that, in this case, $\bar{x}_k$ is the unique solution to VI (\ref{vi:1}).
However, noting that (\ref{sharp}) implies that
$$\langle f'(x^*),x-x^*\rangle\ge\alpha\|x-x^*\|,\quad x\in C, $$
we obtain
\begin{align*}
\langle f'(x_k),x-x^*\rangle
&=\langle f'(x_k)-f'(x^*),x-x^*\rangle+\langle f'(x^*),x-x^*\rangle  \\
&\ge -\|f'(x_k)-f'(x^*)\|_*\|x-x^*\|+\alpha\|x-x^*\|\\
&=\|x-x^*\|(\alpha-\|f'(x_k)-f'(x^*)\|_*)\ge 0
\end{align*}
for all $k$ large enough so that $\|f'(x_k)-f'(x^*)\|_*<\alpha$.
For any such $k$, we find that $x^*$ is also a solution of VI (\ref{vi:1}) and thus
$\bar{x}_k=x^*$ by uniqueness, which implies that $\gamma_k=1$ and $x_{k+1}=x^*$.

(iv)
Since $C$ is compact in the norm topology, $\{x_k\}$ is relatively compact in the strong topology.
Hence the set of strong cluster points of $\{x_k\}$ is nonempty. Denote this set by $\Omega$.
We must verify that $\Omega$ is singleton. By assumption we know that $\Omega$ is a finite set, which is enumerated
as $\Omega=\{x_1^*,\cdots,x_m^*\}$, where $m\ge 1$ is an integer. We next prove $m=1$ by contradiction.
Suppose on the contrary that $m>1$.
Let $\varepsilon$ satisfy
\begin{equation}\label{eq:3:epsilon}
0<\varepsilon<\frac{\min\{\|x^*_i-x^*_j\|: 1\le i\not=j\le m\}}{\max\{m+1,3\}}
\end{equation}
and define
$$N_i:=\{k\in\mathbb{N}: \|x_k-x^*_i\|<\varepsilon\},\quad i=1,2,\cdots, m.$$
Then $\{N_i\}$ are mutually disjoint: $N_i\cap N_j=\emptyset$ for all $i\not=j$. Moreover,
$$\mathbb{N}\setminus \cup_{i=1}^m N_i$$
is at most a finite set. Therefore, we may assume that
$$\mathbb{N}=\cup_{i=1}^m N_i.$$
Now by (C1) (i.e., $\gamma_k\to 0$), we find from (\ref{FWA:1b}) that
$\|x_{k+1}-x_k\|\to 0$. Let $k_0$ satisfy
$$\|x_{k+1}-x_k\|<\varepsilon$$
for all $k\ge k_0$. Now let $k'>k_0$ be the smallest integer such that
\begin{equation}\label{eq:3:x1}
\|x_{k'}-x^*_1\|<\varepsilon.
\end{equation}
Namely, $k'\in N_1$.
Consequently, $k'-1\in N_{i'}$ for some $i'>1$ (it is impossible that $k'-1\in N_{1}$
by virtue of (\ref{eq:3:epsilon})). We now arrive at the contradiction:
\begin{align*}
3\varepsilon<\|x^*_1-x^*_{i'}\|\le\|x^*_1-x_{k'}\|+\|x_{k'}-x_{k'-1}\|+\|x_{k'-1}-x^*_{i'}\|<3\varepsilon.
\end{align*}
This finishes the proof of (iv).
\end{proof}

\begin{remark}
Part (iii) of Theorem \ref{th:FWA-iterates} is also proved in \cite[Theorem 3, p. 211]{Polyak1987}
in a Hilbert space and under the assumption that $\nabla f$ be Lipschitz continuous.

\end{remark}

\section{Rate of Convergence of the Frank-Wolfe Algorithm}
\label{sec:FWA-rate}

The concept of curvature constant plays a key role in discussing the rate of convergence of FWA.

\begin{definition}\label{def:curvature1}\cite{Ja2013,JH2014,JS2010}
Let $C$ be a nonempty closed convex bounded subset of a real Banach space $\mathbb{R}^d$
and let $f: \mathbb{R}^d\to\mathbb{R}$ be a differentiable  function.
The curvature constant of $f$ over $C$, $C_f$, is defined as the number in $[0,\infty]$:
\begin{equation}\label{curvature-C}
C_f=\sup_{{x,s\in C \atop \gamma\in (0,1]}\atop y=x+\gamma(s-x)}
\frac{2}{\gamma^2}(f(y)-f(x)-\langle y-x,\nabla f(x)\rangle).
\end{equation}
\end{definition}
The curvature constant $C_f$ plays a key role in the analysis of convergence rate of FWA
in the case where $f$ has a Lipschitz continuous gradient, as shown in the result below.

\begin{theorem}\label{th:FWA-rate0}\cite[Theorem 1]{Ja2013,JH2014}
Let $\{x_k\}$ be generated by FWA (\ref{al:FWA:0}). Then
$$f(x_k)-f(x^*)\le \frac{2C_f}{k+2},$$
where $x^*\in C$ is an optimal solution of (\ref{min:0:f}).
\end{theorem}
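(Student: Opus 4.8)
The plan is to convert the definition of the curvature constant $C_f$ into a one-step descent inequality and then feed it into a clean scalar induction. First I would observe that the very definition \eqref{curvature-C} says that for any $x,s\in C$ and $\gamma\in(0,1]$, writing $y=x+\gamma(s-x)$,
$$f(y)\le f(x)+\gamma\langle \nabla f(x),s-x\rangle+\frac{\gamma^2}{2}C_f.$$
Applying this with $x=x_k$, $s=\bar{x}_k$, and $\gamma=\gamma_k:=\frac{2}{k+2}\in(0,1]$, so that $y=x_{k+1}$ by \eqref{FWA:0b}, yields
$$f(x_{k+1})\le f(x_k)+\gamma_k\langle \nabla f(x_k),\bar{x}_k-x_k\rangle+\frac{\gamma_k^2}{2}C_f.$$

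Next I would bound the linear term. Since $\bar{x}_k$ minimizes $x\mapsto\langle \nabla f(x_k),x\rangle$ over $C$ by \eqref{FWA:0a}, we have $\langle \nabla f(x_k),\bar{x}_k\rangle\le\langle \nabla f(x_k),x^*\rangle$, and convexity of $f$ gives $\langle \nabla f(x_k),x^*-x_k\rangle\le f(x^*)-f(x_k)$. Writing $h_k:=f(x_k)-f(x^*)$, these combine to
$$\langle \nabla f(x_k),\bar{x}_k-x_k\rangle\le\langle \nabla f(x_k),x^*-x_k\rangle\le -h_k.$$
Substituting into the descent inequality and subtracting $f(x^*)$ produces the scalar recursion
$$h_{k+1}\le(1-\gamma_k)h_k+\frac{\gamma_k^2}{2}C_f=\left(1-\frac{2}{k+2}\right)h_k+\frac{2C_f}{(k+2)^2}.$$

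Finally I would close the estimate by induction on $k$, proving $h_k\le\frac{2C_f}{k+2}$ for $k\ge1$. For the base case $k=1$, the recursion at $k=0$ has $\gamma_0=1$, so the first-order term is annihilated and $h_1\le\frac{1}{2}C_f\le\frac{2C_f}{3}$. For the inductive step, assuming $h_k\le\frac{2C_f}{k+2}$, the recursion gives
$$h_{k+1}\le\frac{k}{k+2}\cdot\frac{2C_f}{k+2}+\frac{2C_f}{(k+2)^2}=\frac{2C_f(k+1)}{(k+2)^2},$$
and the elementary inequality $(k+1)(k+3)\le(k+2)^2$ (i.e. $3\le4$) shows the right-hand side is at most $\frac{2C_f}{k+3}$, completing the induction.

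I expect no serious obstacle: the argument is essentially algebraic once the curvature constant is available. The only two points requiring care are (a) verifying that $\gamma_k\in(0,1]$ for every $k$ so that \eqref{curvature-C} legitimately applies, which holds since $\gamma_0=1$ and $\gamma_k<1$ for $k\ge1$; and (b) handling the base case cleanly, which works precisely because $\gamma_0=1$ kills the $(1-\gamma_0)h_0$ term and thereby lets us avoid any a priori bound on $h_0$. The statement is therefore most naturally read as holding for all $k\ge1$.
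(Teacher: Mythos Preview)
Your proof is correct. Note, however, that the paper does not actually supply its own proof of this theorem: it is stated with a citation to \cite{Ja2013,JH2014} and immediately followed by a remark, with no proof environment. So there is nothing in the paper to compare against directly.

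That said, your argument is precisely the standard one from the cited references: unfold the curvature-constant definition into the one-step descent inequality, use the Frank--Wolfe optimality of $\bar{x}_k$ together with convexity to get $\langle\nabla f(x_k),\bar{x}_k-x_k\rangle\le -h_k$, and close with the scalar induction $(k+1)(k+3)\le(k+2)^2$. Your handling of the base case via $\gamma_0=1$ annihilating the $(1-\gamma_0)h_0$ term is exactly the device that makes the bound hold from $k=1$ without any assumption on $h_0$. If one wants a point of contrast with the paper's own methodology, the paper's proof of the more general Theorem~\ref{th:sigma} (the order-$\sigma$ analogue under the open loop rule) proceeds slightly differently: it introduces an auxiliary sequence $\beta_k$ satisfying $\beta_{k+1}=(1-\gamma_k)\beta_k+\gamma_k^\sigma$, proves $\theta_k\le\Delta\beta_k$, and then bounds $\beta_k$ via a separate calculus lemma (Lemma~\ref{le:beta-sig}). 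Specialized to $\sigma=2$ and $\gamma_k=\frac{2}{k+2}$ that route would also recover the result, but your direct induction is cleaner for this particular statement.
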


\begin{remark}
We notice that the notion of curvature constant works for the case where $\nabla f$
is Lipschitz continuous. As a matter of fact, if $\nabla f$ is $L$-Lipschitz, then it is
easy to find that $C_f\le \delta^2L$, where $\delta:=\sup\{\|u-v\|: u,v\in C\}<\infty$ is
diameter of $C$. However, it does not work for the situation where $f'$ fails to be
Lipschitz continuous, for instance, $f'$ being
$\nu$-H\"older continuous for $\nu\in (0,1)$, as shown by the following simple example.

\begin{eg}\label{eq:curvature}
Take $f(t)=t^\alpha$, $t\in \mathbb{R}$, $\alpha\in (1,2)$, and
$C=[0,1]$. Then $\nabla f(t)=\alpha t^{\alpha-1}$ is $(\alpha-1)$-H\"older continuous
(not Lipschitz continuous). It is easily found that $C_f=\infty$.
\end{eg}

\subsection{Curvature Constant of Order $\sigma$}

In order to accommodate the case where the gradient $\nabla f$ is non-Lipschitz continuous, we
here introduce the notion of curvature constant of order $\sigma$ of $f$ over $C$.

\end{remark}

\begin{definition}\label{def:curvature2}
Let $C$ be a nonempty closed bounded convex subset of a real Banach space $X$
and let $f: X\to\mathbb{R}$ be a differentiable  function.
The curvature constant of $f$ of order $\sigma\in (1,2]$ over $C$, $C_f^\sigma$, is defined as the number:
\begin{equation}\label{curvature-order1}
C^{(\sigma)}_f=\sup_{{x,s\in C \atop \gamma\in (0,1]}\atop y=x+\gamma(s-x)}
\frac{\sigma}{\gamma^\sigma}(f(y)-f(x)-\langle y-x,f'(x)\rangle).
\end{equation}
Equivalently, $C^{(\sigma)}_f\ge 0$ is the least nonnegative number such that
\begin{equation}\label{curvature-order2}
f(y)\le f(x)+\langle y-x,f'(x)\rangle+\frac{\gamma^{\sigma}}{\sigma}C^{(\sigma)}_f
\end{equation}
for all $x,y\in C$ such that $y=x+\gamma(s-x)$ for all $0\le\gamma\le 1$ and $s\in C$.

When $\sigma=2$, $C^{(2)}_f$ coincides with the curvature constant $C_f$ of Definition \ref{def:curvature1}.
\end{definition}

It is not hard to find that the curvature constant of order $\alpha\in (1,2)$ of the function $f$ over $[0,1]$
defined in Example \ref{eq:curvature} is $C_f^{(\alpha)}=\alpha$ (recall $C_f=\infty$).
Indeed, since $f'$ is $(\alpha-1)$-H\"older continuous with constant $L_\alpha=\alpha$,
we have by Corollary \ref{cor:curva-Holder} below that $C_f^{(\alpha)}\le\alpha$.
On the other hand,
\begin{align*}
C_{f}^{(\alpha)}&=\sup_{{x,s,\gamma\in (0,1)}\atop y=x+\gamma(s-x)}
\frac{\alpha}{\gamma^\alpha}(y^\alpha-x^\alpha-\alpha(y-x)x^{\alpha-1})\\
&=\alpha\cdot
\sup_{\gamma, s,x\in (0,1)}\left\{\left(\frac{x}{\gamma}+s-x\right)^\alpha-\left(\frac{x}{\gamma}\right)^\alpha
-\alpha(s-x)\left(\frac{x}{\gamma}\right)^{\alpha-1}\right\}.
\end{align*}
Taking $x=0$ immediately implies that $C_{f}^{(\alpha)}\ge\alpha$; hence $C_{f}^{(\alpha)}=\alpha$.

\begin{remark}
Assume $f$ is continuously Fr\'echet differentiable and strongly convex with power $\sigma\in (1,2]$, namely, 
\begin{equation}\label{eq:convex-power}
f(\lambda x+(1-\lambda)y)\le \lambda f(x)+(1-\lambda)f(y)-\mu_\sigma W_\sigma(\lambda)\|x-y\|^\sigma
\end{equation}
for all $\lambda\in (0,1)$ and $x,y\in X$, where $\mu_\sigma>0$ is a constant and 
$W_\sigma(\lambda)=\lambda^\sigma(1-\lambda)+\lambda(1-\lambda)^\sigma$.
Note that (\ref{eq:convex-power}) implies that
\begin{equation*}
f(x)\ge f(y)+\langle f'(y),x-y\rangle+\mu_\sigma\|x-y\|^\sigma,\quad x,y\in X.
\end{equation*}
As a result, we obtain a lower bound for the curvature constant of order $\sigma$ as follows:
$$C_f^{(\sigma)}\ge\frac{\sigma}{\mu_\sigma}({\rm diam}(C))^\sigma.$$
In particular, if $f$ is strongly convex (i.e., strongly convex with power 2), then 
we have a lower bound for the curvature constant:
$$C_f\ge\frac{2}{\mu_2}({\rm diam}(C))^2.$$

\end{remark}

We can use the modulus of continuity of the Fr\'echet derivative $f'$ to
estimate $C^{(\sigma)}_f$.

\begin{prop}\label{prop:modulus}
Suppose $f'$ is uniformly continuous over $C$. Then the
curvature constant of order $\sigma$ of $f$ has the estimate:
\begin{equation}\label{eq:C-f}
C^{(\sigma)}_f\le\sup_{0<\gamma\le 1}\frac{\sigma}{\gamma^\sigma}\int_0^{\gamma\cdot{\rm diam}(C)}\omega(f',\tau)d\tau.
\end{equation}

\end{prop}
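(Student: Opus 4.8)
The plan is to start from the integral (Taylor) representation of the remainder and then convert a pointwise bound into an integral of the modulus of continuity via a change of variables. Fix $x,s\in C$ and $\gamma\in(0,1]$, and set $y=x+\gamma(s-x)$, so that $y-x=\gamma(s-x)$. Applying the fundamental theorem of calculus to $t\mapsto f(x+t(y-x))$ (exactly as already used in the proofs of Theorems \ref{th:FWA-convergence1} and \ref{th:open-loop-1}), I would write
$$f(y)-f(x)-\langle y-x,f'(x)\rangle=\int_0^1\langle f'(x+t\gamma(s-x))-f'(x),\gamma(s-x)\rangle\,dt.$$

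Next I would estimate the integrand by the duality pairing inequality $\langle u^*,u\rangle\le\|u^*\|\,\|u\|$ together with the definition of the modulus of continuity. Since $\|(x+t\gamma(s-x))-x\|=t\gamma\|s-x\|$, the monotonicity of $\omega(f',\cdot)$ gives $\|f'(x+t\gamma(s-x))-f'(x)\|\le\omega(f',t\gamma\|s-x\|)$, and hence the above integral is bounded above by
$$\int_0^1\omega(f',t\gamma\|s-x\|)\,\gamma\|s-x\|\,dt.$$

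The key step is the substitution $\tau=t\gamma\|s-x\|$ (so $d\tau=\gamma\|s-x\|\,dt$), which rewrites the last expression as $\int_0^{\gamma\|s-x\|}\omega(f',\tau)\,d\tau$. Because $\omega(f',\cdot)$ is nonnegative and nondecreasing and $\|s-x\|\le{\rm diam}(C)$, this is at most $\int_0^{\gamma\cdot{\rm diam}(C)}\omega(f',\tau)\,d\tau$. Multiplying through by $\sigma/\gamma^\sigma$ and taking the supremum over all admissible $x,s\in C$ and $\gamma\in(0,1]$ — noting that the resulting bound no longer depends on $x$ or $s$ — yields the claimed estimate directly from Definition \ref{def:curvature2}.

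There is no serious obstacle here: the argument is a routine chain consisting of Taylor's theorem, the duality inequality, and a change of variables. The only point requiring a little care is the monotonicity step, where I replace $\|s-x\|$ by the larger quantity ${\rm diam}(C)$; this can only enlarge the integral precisely because the integrand $\omega(f',\tau)$ is nonnegative, and the uniform continuity hypothesis guarantees $\omega(f',\tau)<\infty$ so that every integral involved is finite and the manipulations are legitimate.
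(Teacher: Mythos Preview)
Your proposal is correct and follows essentially the same route as the paper's proof: integral Taylor remainder, duality bound by the modulus of continuity, the change of variables $\tau=t\gamma\|s-x\|$, and the replacement of $\|s-x\|$ by ${\rm diam}(C)$ before taking the supremum. The only cosmetic difference is that the paper carries the supremum over $x,s\in C$ through to the end and writes the last step as an equality (since $\sup_{x,s\in C}\|s-x\|={\rm diam}(C)$), whereas you bound by ${\rm diam}(C)$ first and then take the supremum over $\gamma$; for the stated inequality this is immaterial.
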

\begin{proof}
Since (recalling $y=x+\gamma(s-x)$)
\begin{align*}
f(y)&=f(x)+\int_0^1 \langle f'(x+t(y-x)),y-x\rangle dt\\
&=f(x)+\langle f'(x),y-x\rangle+\int_0^1 \langle f'(x+t(y-x))-f'(x),y-x\rangle dt\\
&=f(x)+\langle f'(x),y-x\rangle+\gamma\int_0^1 \langle f'(x+t\gamma(s-x))-f'(x),s-x\rangle dt,
\end{align*}
it turns out that
\begin{align*}
C^{(\sigma)}_f&=\sup_{{x,s\in C \atop \gamma\in [0,1]}\atop y=x+\gamma(s-x)}
\frac{\sigma}{\gamma^\sigma}(f(y)-f(x)-\langle y-x,f'(x)\rangle)\\
&=\sup_{x,s\in C\atop 0<\gamma\le 1}\frac{\sigma}{\gamma^\sigma}\cdot\gamma\int_0^1
\langle f'(x+t\gamma(s-x))-f'(x),s-x\rangle dt\\
&\le\sup_{x,s\in C\atop 0<\gamma\le 1}\frac{\sigma}{\gamma^\sigma}\cdot \gamma
\int_0^1 \omega(f',t\gamma\|s-x\|)\|s-x\| dt\\
&=\sup_{x,s\in C\atop 0<\gamma\le 1}\frac{\sigma}{\gamma^\sigma}\int_0^{\gamma\|s-x\|} \omega(f',\tau)d\tau\\
&=\sup_{0<\gamma\le 1}\frac{\sigma}{\gamma^\sigma}\int_0^{\gamma\cdot{\rm diam}(C)} \omega(f',\tau)d\tau.
\end{align*}
\end{proof}
\begin{cor}\label{cor:curva-Holder}
If $f'$ is $\nu$-H\"older continuous for some $0<\nu\le 1$ with constant $L_\nu$,
 then $C^{(1+\nu)}_f\le L_{\nu}\delta^{1+\nu}$. In particular, if $f'$ is $L$-Lipschitz, then
$C_f\le L \delta^{2}$. (Here $\delta={\rm diam}(C)$.)
\end{cor}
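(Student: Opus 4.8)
The plan is to chain together the two estimates already available: Proposition \ref{prop:modulus}, which controls the curvature constant of order $\sigma$ by a supremum of a rescaled integral of the modulus of continuity $\omega(f',\cdot)$, and Proposition \ref{prop:Holder}, which gives the H\"older bound $\omega(f',\tau)\le L_\nu\tau^\nu$ on that modulus. The one genuine decision is the choice of order, and the natural one is $\sigma=1+\nu$; since $\nu\in(0,1]$ we have $1+\nu\in(1,2]$, so $C_f^{(1+\nu)}$ is legitimately defined by Definition \ref{def:curvature2}.

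First I would apply Proposition \ref{prop:Holder} to $f'$ to record that $\omega(f',\tau)\le L_\nu\tau^\nu$ for every $\tau>0$. Next I would substitute this pointwise bound into the integral on the right-hand side of (\ref{eq:C-f}) with $\sigma=1+\nu$, evaluating the resulting power integral explicitly:
$$\int_0^{\gamma\delta}\omega(f',\tau)\,d\tau\le L_\nu\int_0^{\gamma\delta}\tau^\nu\,d\tau=\frac{L_\nu(\gamma\delta)^{1+\nu}}{1+\nu},\qquad 0<\gamma\le 1.$$

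The decisive step is then purely algebraic: multiplying through by the prefactor $\frac{1+\nu}{\gamma^{1+\nu}}$ from (\ref{eq:C-f}) cancels both the constant $(1+\nu)$ and, crucially, the power $\gamma^{1+\nu}$, leaving the $\gamma$-independent quantity
$$\frac{1+\nu}{\gamma^{1+\nu}}\cdot\frac{L_\nu(\gamma\delta)^{1+\nu}}{1+\nu}=L_\nu\delta^{1+\nu}.$$
Since this bound no longer depends on $\gamma$, the supremum over $\gamma\in(0,1]$ in (\ref{eq:C-f}) is trivially attained, and we conclude $C_f^{(1+\nu)}\le L_\nu\delta^{1+\nu}$. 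The Lipschitz assertion then follows immediately by specializing to $\nu=1$, $L_1=L$, and recalling from Definition \ref{def:curvature2} that $C_f=C_f^{(2)}$, which yields $C_f\le L\delta^2$.

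There is no substantive obstacle in this argument; it is a direct computation once Propositions \ref{prop:modulus} and \ref{prop:Holder} are in hand. The only point worth emphasizing is the exact matching of exponents: choosing the order to be precisely $\sigma=1+\nu$ is what forces the $\gamma$-dependence to disappear and renders the supremum vacuous. Any other choice of $\sigma$ would leave a residual factor $\gamma^{\,1+\nu-\sigma}$, causing the supremum either to diverge (for $\sigma>1+\nu$) or to collapse to zero (for $\sigma<1+\nu$), so the H\"older exponent $\nu$ dictates the sharp order of the curvature constant.
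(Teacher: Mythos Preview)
Your proof is correct and follows exactly the same route as the paper: apply Proposition \ref{prop:modulus} with $\sigma=1+\nu$, bound $\omega(f',\tau)$ by $L_\nu\tau^\nu$ via Proposition \ref{prop:Holder}, integrate, and observe that the $\gamma$-dependence cancels so the supremum is $L_\nu\delta^{1+\nu}$. One small slip in your closing commentary (not in the proof itself): for $\sigma<1+\nu$ the residual factor $\gamma^{\,1+\nu-\sigma}$ makes the supremum equal to $\tfrac{\sigma}{1+\nu}L_\nu\delta^{1+\nu}$ at $\gamma=1$, not zero.
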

\begin{proof}
By Proposition \ref{prop:modulus}, we get (with $\sigma=1+\nu$)
\begin{align*}
C^{(\sigma)}_f
&\le\sup_{0<\gamma\le 1}\frac{\sigma}{\gamma^\sigma}\int_0^{\gamma d} \omega(f',\tau)d\tau\\
&\le\sup_{0<\gamma\le 1}\frac{\sigma}{\gamma^\sigma}\int_0^{\gamma d} L_{\nu}\tau^{\nu}d\tau\\
&=\sup_{0<\gamma\le 1}\frac{\sigma}{\gamma^\sigma}\frac{L_{\nu}\gamma^{1+\nu}}{1+\nu}d^{1+\nu}\\
&=L_{\nu}\delta^{1+\nu}.
\end{align*}
\end{proof}

\subsection{Stepsizes by Line Minimization}

We can now show the role played by the curvature constant of order $\sigma$ in
the analysis of rate of convergence of FWA.

\begin{theorem}\label{th:FWA-rate1}
Let the assumptions in Theorem \ref{th:FWA-convergence1} hold.
Assume, in addition, that there exists $\sigma>1$ such that
the curvature constant of order $\sigma$ of $f$ over $C$, $C_f^{(\sigma)}$, is finite.
Let $\{x_k\}$ be generated by the Frank-Wolfe algorithm (\ref{al:FWA:1}),
where the sequence of stepsizes, $\{\gamma_k\}$, is selected by the
line minimization search method (\ref{gamma:1}). Then we have
\begin{equation}
f(x_k)-f^*\le \frac{\theta}
{\left(1+\frac{1}{\sigma}\theta^{\frac{1}{\sigma-1}}(C^{(\sigma)}_f)^{\frac{1}{1-\sigma}}\cdot k\right)^{\sigma-1}}
=O\left(\frac{1}{k^{\sigma-1}}\right),
\end{equation}
where $\theta=f(x_0)-f^*$. In particular, we get
\begin{itemize}
\item
If $f'$ is $\nu$-H\"older continuous with constant $L_\nu$, then
\begin{equation*}
f(x_k)-f^*\le \frac{\theta}
{\left(1+\frac{1}{1+\nu}\theta^{\frac{1}{\nu}}(L_{\nu}\delta^{1+\nu})^{-\frac{1}{\nu}}\cdot k\right)^{\nu}}
=O\left(\frac{1}{k^{\nu}}\right).
\end{equation*}
\item
If If $f'$ is Lipschitz continuous with constant $L$, then
\begin{equation*}
f(x_k)-f^*\le \frac{\theta}
{1+\frac{\theta}{2L\delta^2}\cdot k}
=O\left(\frac{1}{k}\right).
\end{equation*}
\end{itemize}
Here $\delta={\rm diam}(C)$.

\end{theorem}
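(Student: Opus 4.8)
The plan is to merge three ingredients that are already available: the defining inequality (\ref{curvature-order2}) of the curvature constant of order $\sigma$, the convexity estimate (\ref{f'xk0}) for $\langle f'(x_k),\bar{x}_k-x_k\rangle$, and Polyak's recursion Lemma \ref{le:2:ineq}. Write $\theta_k=f(x_k)-f^*$. First I would fix an arbitrary test stepsize $\gamma\in[0,1]$ and set $y=x_k+\gamma(\bar{x}_k-x_k)$, so that $s=\bar{x}_k$ in Definition \ref{def:curvature2}; since $\gamma_k$ is chosen by exact line minimization (\ref{gamma:1}), we have $f(x_{k+1})\le f(y)$. Applying (\ref{curvature-order2}) and then (\ref{f'xk0}) gives
\begin{align*}
f(x_{k+1})&\le f(x_k)+\gamma\langle f'(x_k),\bar{x}_k-x_k\rangle+\frac{\gamma^\sigma}{\sigma}C_f^{(\sigma)}\\
&\le f(x_k)-\gamma\theta_k+\frac{\gamma^\sigma}{\sigma}C_f^{(\sigma)},
\end{align*}
that is, $\theta_{k+1}\le(1-\gamma)\theta_k+\frac{\gamma^\sigma}{\sigma}C_f^{(\sigma)}$ for every $\gamma\in[0,1]$.

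Next I would minimize the right-hand side over $\gamma$. The unconstrained minimizer is $\gamma^*=(\theta_k/C_f^{(\sigma)})^{1/(\sigma-1)}$, and substituting it collapses the bound to the power recursion
\begin{equation*}
\theta_{k+1}\le\theta_k-\frac{\sigma-1}{\sigma}\bigl(C_f^{(\sigma)}\bigr)^{-\frac{1}{\sigma-1}}\theta_k^{\frac{\sigma}{\sigma-1}}.
\end{equation*}
This is precisely the hypothesis of Lemma \ref{le:2:ineq} with $\eta=\frac{1}{\sigma-1}$ and the constant $\beta=\frac{\sigma-1}{\sigma}(C_f^{(\sigma)})^{-1/(\sigma-1)}$. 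Since $1/\eta=\sigma-1$ and $\eta\theta_0^{\eta}\beta=\frac1\sigma\theta_0^{1/(\sigma-1)}(C_f^{(\sigma)})^{1/(1-\sigma)}$, the conclusion of that lemma is exactly the displayed estimate with $\theta=\theta_0$. The two special cases then follow by inserting the bounds of Corollary \ref{cor:curva-Holder}: for $\nu$-H\"older $f'$ take $\sigma=1+\nu$ and $C_f^{(1+\nu)}\le L_\nu\delta^{1+\nu}$, and for Lipschitz $f'$ take $\sigma=2$ and $C_f\le L\delta^2$. Because the curvature constant enters the bound through a negative power, replacing $C_f^{(\sigma)}$ by these larger quantities only enlarges the fraction, so the stated forms remain valid (slightly weaker) upper bounds.

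The step I expect to be the main obstacle is the legitimacy of substituting $\gamma^*$, which lies in $[0,1]$ only when $\theta_k\le C_f^{(\sigma)}$; outside that range $\gamma^*>1$ and the power recursion can fail, its right-hand side possibly even becoming negative. I would handle this by observing that the choice $\gamma=1$ in the first recursion already forces $\theta_{k+1}\le\frac1\sigma C_f^{(\sigma)}<C_f^{(\sigma)}$, so that together with the monotonicity $\theta_{k+1}\le\theta_k$ from Theorem \ref{th:FWA-convergence1}(i) the feasibility $\theta_k\le C_f^{(\sigma)}$ is secured for all $k\ge1$; the clean bound written in terms of $\theta=\theta_0$ then corresponds to the regime $\theta_0\le C_f^{(\sigma)}$, where $\gamma^*$ is feasible from the very first step and Lemma \ref{le:2:ineq} applies from $k=0$.
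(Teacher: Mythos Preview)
Your argument is correct and proceeds along the same lines as the paper's: curvature inequality (\ref{curvature-order2}), line-search freedom, convexity bound (\ref{f'xk0}), and finally Lemma \ref{le:2:ineq}. The only difference is that you apply (\ref{f'xk0}) \emph{before} minimizing over $\gamma$ rather than after (the paper first bounds the Frank-Wolfe gap $\langle f'(x_k),x_k-\bar{x}_k\rangle$ and only then invokes (\ref{f'xk0})), which shortens the derivation; your handling of the boundary case $\gamma^*>1$ is also more explicit than the paper's informal ``we may assume'' step.
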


\begin{proof}
First observe from (\ref{vi:1}) that $\langle f'(x_k),x_k-\bar{x}_k\rangle\ge 0$.
By (\ref{curvature-order2}), we have
\begin{align*}
f(x_{k+1})&=\min_{0\le \gamma\le 1} f(x_k+\gamma(\bar{x}_k-x_k))\\
&\le \min_{0\le \gamma\le 1}\{f(x_k)+\gamma\langle f'(x_k),\bar{x}_k-x_k\rangle+\frac{\gamma^\sigma}{\sigma}C^{(\sigma)}_f\}
=:\varphi_k(\gamma).
\end{align*}
The minimizer $\bar{\gamma}\in\mathbb{R}$ satisfies the first optimality condition:
$$\langle f'(x_k),\bar{x}_k-x_k\rangle+\gamma^{\sigma-1}C^{(\sigma)}_f=0,\quad {\rm that\, is,}\quad
\bar{\gamma}^{\sigma-1}=\frac{\langle f'(x_k),x_k-\bar{x}_k\rangle}{C^{(\sigma)}_f}\ge 0.$$
If $\bar{\gamma}\le 1$, then it follows that
\begin{align*}
f(x_{k+1})
&\le f(x_k)+\bar{\gamma}\langle f'(x_k),\bar{x}_k-x_k\rangle+\frac{\bar{\gamma}^\sigma}{\sigma}C^{(\sigma)}_f\\
&=f(x_k)-\bar{\gamma}[\langle f'(x_k),x_k-\bar{x}_k\rangle-\frac{1}{\sigma}\bar{\gamma}^{\sigma-1}C^{(\sigma)}_f]\\
&=f(x_k)-(1-\frac{1}{\sigma})\bar{\gamma}\langle f'(x_k),x_k-\bar{x}_k\rangle\\
&=f(x_k)-\frac{\sigma-1}{\sigma (C^{\left(\sigma\right)}_f)^{\frac{1}{\sigma-1}}}\langle f'(x_k),x_k-\bar{x}_k\rangle^{\frac{\sigma}{\sigma-1}}.
\end{align*}

It turns out that
\begin{equation}\label{ineq:1}
\langle f'(x_k),x_k-\bar{x}_k\rangle\le \left(\frac{\sigma}{\sigma-1}\right)^{\frac{\sigma-1}{\sigma}}
  (C^{(\sigma)}_f)^{\frac1{\sigma}}[f(x_k)-f(x_{k+1})]^{\frac{\sigma-1}{\sigma}}.
\end{equation}

If $\bar{\gamma}>1$, then
$$\langle f'(x_k),x_k-\bar{x}_k\rangle>C^{(\sigma)}_f$$
which then implies that
\begin{align*}
f(x_{k+1})&\le \varphi_k(1)
=f(x_k)+\langle f'(x_k),\bar{x}_k-x_k\rangle+\frac{1}{\sigma}C^{(\sigma)}_f\\
&\le f(x_k)-(1-\frac{1}{\sigma})\langle f'(x_k),x_k-\bar{x}_k\rangle.
\end{align*}

It turns out that
\begin{equation}\label{ineq:2}
\langle f'(x_k),x_k-\bar{x}_k\rangle\le \frac{\sigma}{\sigma-1}[f(x_k)-f(x_{k+1})].
\end{equation}
Combining (\ref{ineq:1}) and (\ref{ineq:2}) yields
\begin{align}\label{ineq:3}
0\le\langle f'(x_k),x_k-\bar{x}_k\rangle
\le\max\bigg\{&\left(\frac{\sigma}{\sigma-1}\right)^{\frac{\sigma-1}{\sigma}}
(C^{(\sigma)}_f)^{\frac1{\sigma}}[f(x_k)-f(x_{k+1})]^{\frac{\sigma-1}{\sigma}},\nonumber \\
&\quad \frac{\sigma}{\sigma-1}[f(x_k)-f(x_{k+1})]\bigg\}
\end{align}
for all $k$.

Now since $f(x_k)-f(x_{k+1})\to 0$ as $k\to\infty$, we may assume that
$$ \left(\frac{\sigma}{\sigma-1}\right)^{\frac{\sigma-1}{\sigma}}
  (C^{(\sigma)}_f)^{\frac1{\sigma}}[f(x_k)-f(x_{k+1})]^{\frac{\sigma-1}{\sigma}}
  >\frac{\sigma}{\sigma-1}[f(x_k)-f(x_{k+1})]$$
for all $k$; consequently from (\ref{ineq:3}) we get
\begin{equation}\label{f(x-k)}
0\le\langle f'(x_k),x_k-\bar{x}_k\rangle\le
\left(\frac{\sigma}{\sigma-1}\right)^{\frac{\sigma-1}{\sigma}}
  (C^{(\sigma)}_f)^{\frac1{\sigma}}[f(x_k)-f(x_{k+1})]^{\frac{\sigma-1}{\sigma}}.
\end{equation}
This can be rewritten as
\begin{equation*}
f(x_{k+1})\le f(x_k)-\frac{\sigma-1}{\sigma}(C^{(\sigma)}_f)^{-\frac{1}{\sigma-1}}
\langle f'(x_k),x_k-\bar{x}_k\rangle^{\frac{\sigma}{\sigma-1}}
\end{equation*}
which, together with (\ref{f'xk0}), can be further rewritten as
\begin{equation*}
\theta_{k+1}\le \theta_k
-\frac{\sigma-1}{\sigma}(C^{(\sigma)}_f)^{-\frac{1}{\sigma-1}}\theta_k^{\frac{\sigma}{\sigma-1}}.
\end{equation*}
Consequently, by Lemma \ref{le:2:ineq}, we get
$$\theta_k\le \frac{\theta_0}
{\left(1+\frac{1}{\sigma}\theta_0^{\frac{1}{\sigma-1}}(C^{(\sigma)}_f)^{\frac{1}{1-\sigma}}\cdot k\right)^{\sigma-1}}
=O\left(\frac{1}{k^{\sigma-1}}\right).$$
\end{proof}

%
%

\subsection{Stepsizes by Open Loop Rule}

\begin{theorem}\label{th:sigma}
Let $C$ be a nonempty closed bounded convex subset of a Banach space $X$
and $f: X\to\mathbb{R}$ be a continuously Fr\'echet differentiable, convex function.
Assume there exists $\sigma>1$ such that
the curvature constant of order $\sigma$, $C_f^{(\sigma)}$, is finite.
Let $\{x_k\}$ be generated by the Frank-Wolfe algorithm (\ref{al:FWA:1})
with stepsizes $\{\gamma_k\}\subset (0,1]$ satisfying the open loop conditions (C1) and (C2).
Then
\begin{equation}\label{f-f*}
f(x_k)-f^*\le \frac{\sigma^\sigma\Delta}{k^{\sigma-1}}\quad\mbox{ for all $k\ge 1$},
\end{equation}
where $\Delta=\max\{f(x_0)-f^*,\frac{1}{\sigma}C^{(\sigma)}_f\}$.
In particular, we have
\begin{itemize}
\item
If $f'$ is $\nu$-H\"older continuous, then
\begin{equation*}
f(x_k)-f^*\le O\left(\frac{1}{k^{\nu}}\right).
\end{equation*}
\item
If If $f'$ is Lipschitz continuous, then
\begin{equation*}
f(x_k)-f^*\le O\left(\frac{1}{k}\right).
\end{equation*}
\end{itemize}

\end{theorem}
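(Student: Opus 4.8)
The plan is to establish a per-iteration descent inequality analogous to the one used in the line-minimization case, then convert it into the claimed rate by an elementary induction on $k$. The starting point is the same as in the proof of Theorem \ref{th:open-loop-1}: from the integral representation of $f(x_{k+1})$ together with the definition (\ref{curvature-order2}) of the curvature constant of order $\sigma$, I obtain
\begin{equation*}
f(x_{k+1})\le f(x_k)+\gamma_k\langle f'(x_k),\bar{x}_k-x_k\rangle+\frac{\gamma_k^\sigma}{\sigma}C^{(\sigma)}_f.
\end{equation*}
Combining this with the convexity estimate (\ref{f'xk}), namely $\langle f'(x_k),\bar{x}_k-x_k\rangle\le -\theta_k$, yields the fundamental recursion
\begin{equation}\label{eq:plan:recursion}
\theta_{k+1}\le (1-\gamma_k)\theta_k+\frac{\gamma_k^\sigma}{\sigma}C^{(\sigma)}_f.
\end{equation}
This is where the curvature constant of order $\sigma$ does its work: it supplies the $\gamma_k^\sigma$ term that, after optimizing (or just bounding) against the $(1-\gamma_k)$ contraction, produces the $k^{-(\sigma-1)}$ decay.

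\emph{Next I would} choose a concrete admissible stepsize to make the recursion tractable; the natural choice compatible with (C1)--(C2) is $\gamma_k=\sigma/(k+\sigma)$ (or $\gamma_k=\sigma/(k+k_0)$ for a suitable offset), which is the order-$\sigma$ analogue of the classical $2/(k+2)$ rule. With $\Delta=\max\{f(x_0)-f^*,\frac1\sigma C^{(\sigma)}_f\}$, I would prove the bound $\theta_k\le \sigma^\sigma\Delta/k^{\sigma-1}$ by induction. The base case $k=1$ follows from (\ref{eq:plan:recursion}) with $k=0$ and $\gamma_0=1$, using $\theta_1\le \frac1\sigma C^{(\sigma)}_f\le\Delta\le\sigma^\sigma\Delta$. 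For the inductive step I substitute the hypothesis $\theta_k\le\sigma^\sigma\Delta/k^{\sigma-1}$ and the chosen $\gamma_k$ into (\ref{eq:plan:recursion}) and must verify
\begin{equation*}
\Bigl(1-\frac{\sigma}{k+\sigma}\Bigr)\frac{\sigma^\sigma\Delta}{k^{\sigma-1}}
+\frac{1}{\sigma}\Bigl(\frac{\sigma}{k+\sigma}\Bigr)^\sigma C^{(\sigma)}_f
\le \frac{\sigma^\sigma\Delta}{(k+1)^{\sigma-1}}.
\end{equation*}

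\emph{The hard part will be} this final inequality: unlike the Lipschitz case ($\sigma=2$), the exponent $\sigma-1$ is a general real number in $(0,1]$, so the convenient algebraic identities that close the classical $2/(k+2)$ induction are no longer available and must be replaced by real-analytic estimates. The key inequality I will need is a bound of the form $\frac{k+\sigma-\sigma}{k^{\sigma-1}(k+\sigma)}\le\frac{1}{(k+1)^{\sigma-1}}$ together with $\frac1\sigma C^{(\sigma)}_f\le\Delta$; the cleanest route is to use the convexity/concavity of $t\mapsto t^{\sigma-1}$ to show $(k+1)^{\sigma-1}\le k^{\sigma-1}\bigl(1+\frac{\sigma-1}{k}\bigr)$ and then absorb the curvature term. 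After the induction delivers (\ref{f-f*}), the two specializations are immediate consequences of Corollary \ref{cor:curva-Holder}: taking $\sigma=1+\nu$ with $C^{(1+\nu)}_f\le L_\nu\delta^{1+\nu}$ gives the $O(k^{-\nu})$ rate, and $\sigma=2$ with $C_f\le L\delta^2$ gives the $O(k^{-1})$ rate, so no separate argument is required for these corollaries.
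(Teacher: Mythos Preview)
Your proposal is correct and follows the same backbone as the paper: derive the recursion
\[
\theta_{k+1}\le (1-\gamma_k)\theta_k+\tfrac{\gamma_k^\sigma}{\sigma}C^{(\sigma)}_f
\]
from (\ref{curvature-order2}) and (\ref{f'xk}), then upgrade it to the $k^{-(\sigma-1)}$ rate by induction. The technical route differs, however. The paper does not fix a stepsize $\gamma_k=\sigma/(k+\sigma)$ and then struggle with the resulting real-exponent inequality; instead it introduces an auxiliary sequence $\beta_k$ with $\beta_0=1$ and $\beta_{k+1}=(1-\gamma_k)\beta_k+\gamma_k^\sigma$, shows $\theta_k\le\Delta\beta_k$, and proves $\beta_k\le\sigma^\sigma/k^{\sigma-1}$ by \emph{minimizing} $h(\gamma)=(1-\gamma)\sigma^\sigma/k^{\sigma-1}+\gamma^{\sigma}$ over $\gamma\in(0,1]$. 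The minimizer is $\hat\gamma=\sigma/k$, and the inequality $h(\hat\gamma)\le\sigma^\sigma/(k+1)^{\sigma-1}$ reduces (after the substitution $x=1/k$) to showing $(1+x)^{\sigma-1}(1-ax)<1$ for a suitable $a\ge\sigma-1$, which follows from a one-line derivative sign check. This optimization-then-monotonicity argument is cleaner than the direct induction you outline, precisely because it sidesteps the ad hoc estimates you flag as ``the hard part''. Note also that both your approach and the paper's implicitly pin down a specific stepsize (yours $\sigma/(k+\sigma)$, the paper's $\hat\gamma=\sigma/k$), so neither argument actually establishes (\ref{f-f*}) for \emph{arbitrary} open-loop sequences as the theorem statement literally asserts; your explicit acknowledgment of this choice is in fact more transparent than the paper's presentation.
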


\begin{proof}
By definition of the curvature constant of order $\sigma$, we get
\begin{align}\label{fxk++}
f(x_{k+1})&=f(x_k+\gamma_k(\bar{x}_k-x_k)) \nonumber \\
&\le f(x_k)-\gamma_k\langle f'(x_k),x_k-\bar{x}_{k}\rangle+\frac{\gamma_k^{\sigma}}{\sigma}C^{(\sigma)}_f.
\end{align}
This together with  (\ref{f'xk0}) implies that
\begin{equation}\label{theta-k+1}
\theta_{k+1}\le (1-\gamma_k)\theta_k+\frac{\gamma_k^{\sigma}}{\sigma}C^{(\sigma)}_f.
\end{equation}
It turns out from Lemma \ref{le:convergence-tool} that $\theta_k\to 0$, that is, $f(x_k)\to f^*$.

Now define $\{\beta_k\}$ by
\begin{equation}\label{eq:gamma-sigma}
\beta_{k+1}=(1-\gamma_k)\beta_k+\gamma_k^\sigma,\quad \beta_0=1.
\end{equation}

\begin{lemma}\label{le:beta-sig}
Let $\{\beta_k\}\subset (0,1]$ be defined  by (\ref{eq:gamma-sigma}). Then
\begin{equation}\label{eq:beta-k}
\beta_k\le\frac{\sigma^{\sigma}}{k^{\sigma-1}},\quad k\ge 1.
\end{equation}
\end{lemma}

\noindent{\em Proof of Lemma \ref{eq:beta-k}.}
Since $\beta_k\le 1$ for all $k$, we need to verify (\ref{eq:beta-k}) for all $k$ such that $\frac{\sigma^{\sigma}}{k^{\sigma-1}}\le 1$,
that is, $k\ge {\sigma}^{\sigma/(\sigma-1)}$.
Set $\xi=\sigma^{\sigma}$. We will prove (\ref{eq:beta-k}) by induction.
Assume (\ref{eq:beta-k}) is valid for some $k>{\sigma}^{\sigma/(\sigma-1)}$ and we will prove (\ref{eq:beta-k}) for $k+1$.
Namely,
\begin{equation}\label{eq:beta-k+1}
\beta_{k+1}\le\frac{\xi}{(1+k)^{\sigma-1}}.
\end{equation}
By (\ref{eq:gamma-sigma}), it suffices to prove that
\begin{equation}\label{eq:beta-k+2}
(1-\gamma_k)\frac{\xi}{k^{\tau}}+\gamma_k^{\tau+1}\le\frac{\xi}{(1+k)^{\tau}}.
\end{equation}
Here $\tau=\sigma-1\in (0,1]$. Consider the function
$$h(\gamma):=(1-\gamma)\frac{\xi}{k^{\tau}}+\gamma^{\tau+1}, \quad 0\le\gamma\le 1.$$
Then, $h(0)=\frac{\xi}{k^{\tau}}<1$, $h(1)=1$, and
$$h'(\gamma)=-\frac{\xi}{k^{\tau}}+(\tau+1)\gamma^{\tau},\quad
h''(\gamma)=\tau(\tau+1)\gamma^{\tau-1}.$$
Thus, $h$ is a strictly convex function of $\gamma>0$, and the unique solution of $h'(\gamma)=0$ is
given by
$$\hat{\gamma}^\tau=\frac{\xi}{k^{\tau}}\cdot\frac1{\tau+1}\quad {\rm or}\quad
\hat{\gamma}=\left(\frac{\xi}{\tau+1}\right)^{\frac1{\tau}}\frac{1}{k}.$$
It turns out that
\begin{align*}
\min_{0\le\gamma\le 1}h(\gamma)&=h(\hat{\gamma})=(1-\hat{\gamma})\frac{\xi}{k^{\tau}}+\hat{\gamma}^{\tau+1}\\
&=(1-\hat{\gamma})\frac{\xi}{k^{\tau}}+\hat{\gamma}\cdot\frac{\xi}{k^{\tau}}\cdot\frac1{\tau+1}\\
&=\frac{\xi}{k^{\tau}}\left(1-\hat{\gamma}\frac{\tau}{\tau+1}\right)\\
&=\frac{\xi}{k^{\tau}}\left(1-\frac{\tau}{\tau+1}\left(\frac{\xi}{\tau+1}\right)^{\frac1{\tau}}\frac{1}{k}\right).
\end{align*}
We claim that (assuming $\xi\ge(\tau+1)^{\tau+1}=\sigma^{\sigma}$)
\begin{equation}\label{eq:xi}
\frac{\xi}{(1+k)^{\tau}}>h(\hat{\gamma})=
\frac{\xi}{k^{\tau}}\left(1-\frac{\tau}{\tau+1}\left(\frac{\xi}{\tau+1}\right)^{\frac1{\tau}}\frac{1}{k}\right).
\end{equation}
As a matter of fact, setting $x=\frac1{k}$, we equivalently reduce (\ref{eq:xi}) to
\begin{equation}\label{eq:xii}
\frac{1}{(1+x)^{\tau}}>1-ax,
\end{equation}
with $a=\frac{\tau}{\tau+1}\left(\frac{\xi}{\tau+1}\right)^{\frac1{\tau}}$.
Now consider the function:
$$g(x):=(1+x)^\tau(1-ax),\quad 0<x<1.$$
It is easy to find that
$$g'(x)=(1+x)^{\tau-1}[(\tau-a)-a(1+\tau)x]<0$$
for all $x\in (0,1)$ since $\tau\le a$ for $\xi\ge(\tau+1)^{\tau+1}$.
This shows that $g$ is decreasing; consequently, $g(x)<g(0)=1$,
which proves (\ref{eq:xii}) and hence (\ref{eq:xi}).

Next we continue the proof of Theorem \ref{th:sigma} by setting $\Delta=\max\{\theta_0,\frac{1}{\sigma}C^{(\sigma)}_f\}$.
We can easily prove by induction and using (\ref{theta-k+1}) that
$$\theta_k\le\Delta\beta_k$$
for all $k\ge 1$.
By Lemma \ref{le:beta-sig}, we get $\theta_k\le\frac{\Delta\xi}{k^{\sigma-1}}$.
This is (\ref{f-f*}) and Theorem \ref{th:sigma} is proved.
\end{proof}

\begin{remark}
In \cite[Theorem 3]{DH1978}, Dunn and Harshbarger assumed Lipschitz continuity of $f'$ and the condition
on the stepsize sequence $\{\gamma_k\}$:
\begin{equation*}
\gamma_{k+1}=\gamma_k-\frac12\gamma_k^2,\quad \gamma_0=1.
\end{equation*}
We remark that this condition implies $\frac{1}{k+1}\le\gamma_k\le\frac{2}{k+1}$ for all $k\ge 0$, hence
the open loop conditions (C1) and (C2).

\end{remark}
\begin{remark}
The use of H\"older continuous gradient in optimization appeared in Nesterov's recent work \cite{BNV2012,Nesterov2015}.
\end{remark}

\begin{remark}
The convergence rate $O(\frac1{k})$ of FWA (\ref{al:FWA:1}) (with Lipschitz Fr\'echet continuous derivative) can't be improved 
even for strongly convex objective functions, as shown by the following example provided to me by R. Polyak (private communication).
\begin{eg}
Consider, in $\mathbb{R}^n$, the minimization problem $\min_{x\in C} f(x)$, where $f(x)=\frac12\|x\|_2^2$ and 
$C=\{x=(x_1,\cdots,x_n)^\top\in\mathbb{R}^n: x_i\ge 0 \  (\forall i), \ \sum_{i=1}^n x_i=1\}$.
Then the unique optimal solution $x^*$ is given by $x_i^*=\frac1{n}$ for all $1\le i\le n$, and the 
optimal value is $f(x^*)=\frac1{2n}$. 
Let $\{x_k\}$ be generated by FWA (\ref{al:FWA:1}) with $\gamma_k=\frac{2}{k+2}$. 
If the initial guess $x_0\not=x^*$ and if $k\le \frac{n}2-1$, then 
there holds the lower bound:
$$f(x_k)-f(x^*)\ge\frac1{4(k+1)}.$$
\end{eg}
\end{remark}


\section{Generalized Frank-Wolfe Algorithm}
\label{sec:gFWA}

This section is devoted to an extension of FWA to an algorithm,
which is referred to as a generalized Frank-Wolfe algorithm (gFWA), for solving the composite
optimization problem which is recalled below:
\begin{equation}\label{min:f+g}
\min_{x\in C} \varphi(x):=f(x)+g(x),
\end{equation}
where $X$ is a Banach space, $C$ is a closed bounded convex subset of $X$, and
$f, g\in\Gamma_0(X)$, namely, $f, g: X\to (-\infty,\infty]:=\overline{\mathbb{R}}$ are
proper, lower semicontinuous, and convex functions.

We shall use $S$ to denote the set of solutions of (\ref{min:f+g}) and assume $S$ is nonempty.

Furthermore, we always assume that $f$ is continuously Fr\'echet differentiable, and
$C\subset {\rm dom}(g):=\{x\in X: g(x)<\infty\}$.
The Frank-Wolfe algorithm applied to (\ref{min:f+g}) is referred to as a generalized Frank-Wolfe algorithm
(gFWA) which was first considered in \cite{BBLM07,BLM} in some special cases and
which generates a sequence $\{x_k\}_{k=0}^\infty$, with $x_0\in C$ arbitrary, via the iteration procedure:
\begin{subequations} \label{al:gFWA}
  \begin{numcases}{\hbox{}}
\label{gFWA1}
 \bar{x}_k=\arg\min_{x\in C} \langle f'(x_k),x\rangle+g(x),  \\[0.05cm]
 \label{gFWA2}
x_{k+1}=x_k+\gamma_k(\bar{x}_k-x_k).
 \end{numcases}
\end{subequations}
Here $\gamma_k\in [0,1)$ is the stepsize of the $k$th iteration.
In (\ref{gFWA1}) we actually use the first-order linear approximation to $f(x)$ at $x_k$.
Note that (\ref{gFWA1}) is a convex minimization problem.

Similarly to FWA, the convergence and rate of convergence of gFWA depend on the choice of the
stepsizes $\{\gamma_k\}$.

\subsection{Stepsize by Line Minimization}

\begin{theorem}\label{th:gFWA}
Assume the Fr\'echet derivative $f'$ of $f$ is uniformly continuous on $C$.
Let $\{x_k\}$ be generated by the generalized FWA (\ref{al:gFWA}),
where the sequence of stepsizes, $\{\gamma_k\}$, is selected by the
line minimization search method:
\begin{equation}\label{eq:4:gamma}
\gamma_k=\arg\min_{\gamma\in [0,1]} \{f(x_k+\gamma(\bar{x}_k-x_k))+g(x_k+\gamma(\bar{x}_k-x_k))\}.
\end{equation}
Assume the subproblem (\ref{gFWA1}) is solvable for each $k$.
Then we have:
\begin{itemize}
\item[(i)]
$\varphi(x_{k+1})\le \varphi(x_k)$ for all $k$, and
\item[(ii)]
$\lim_{k\to\infty} \varphi(x_k)=\varphi^*:=\min_{x\in C}\varphi(x)$.
\end{itemize}

\end{theorem}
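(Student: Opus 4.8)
The plan is to mirror the structure of the proof of Theorem \ref{th:FWA-convergence1}, since the generalized algorithm (\ref{al:gFWA}) differs from the original FWA only through the extra convex term $g$, while the stepsize is again chosen by exact line minimization. The key observation is that the variational inequality characterizing $\bar{x}_k$ in (\ref{gFWA1}) now reads
\begin{equation*}
\langle f'(x_k),x-\bar{x}_k\rangle+g(x)-g(\bar{x}_k)\ge 0,\quad x\in C,
\end{equation*}
which is the natural analogue of (\ref{vi:1}). Part (i) is immediate and formal: setting $\psi_k(\gamma):=\varphi(x_k+\gamma(\bar{x}_k-x_k))$, exact minimization gives $\varphi(x_{k+1})=\min_{0\le\gamma\le1}\psi_k(\gamma)\le\psi_k(0)=\varphi(x_k)$, so the sequence of objective values is nonincreasing.

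For part (ii) I would again introduce $\theta_k:=\varphi(x_k)-\varphi^*$ and pick a null sequence $\{\tau_k\}\subset(0,1)$ with $\sum_k\tau_k=\infty$. Using $\varphi(x_{k+1})\le\psi_k(\tau_k)=f(x_k+\tau_k(\bar{x}_k-x_k))+g(x_k+\tau_k(\bar{x}_k-x_k))$, I would expand $f$ by the integral form of the fundamental theorem as in (\ref{fx0}) and, crucially, bound $g$ along the segment by convexity: $g(x_k+\tau_k(\bar{x}_k-x_k))\le(1-\tau_k)g(x_k)+\tau_k g(\bar{x}_k)$. Combining these yields
\begin{equation*}
\varphi(x_{k+1})\le\varphi(x_k)+\tau_k\big[\langle f'(x_k),\bar{x}_k-x_k\rangle+g(\bar{x}_k)-g(x_k)\big]+\tau_k\varepsilon_k,
\end{equation*}
where $\varepsilon_k:=\delta\cdot\sup_{0\le t\le1}\|f'(x_k+t\tau_k(\bar{x}_k-x_k))-f'(x_k)\|$ and $\delta=\mathrm{diam}(C)$, exactly as in the proof of Theorem \ref{th:FWA-convergence1}.

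The decisive step is to show that the bracketed quantity is at most $-\theta_k$, which plays the role of (\ref{f'xk0}). For this I use convexity of $f$ together with optimality of $\bar{x}_k$: for any $x\in C$,
\begin{equation*}
\varphi(x)=f(x)+g(x)\ge f(x_k)+\langle f'(x_k),x-x_k\rangle+g(x)\ge f(x_k)+\langle f'(x_k),\bar{x}_k-x_k\rangle+g(\bar{x}_k),
\end{equation*}
where the last inequality is precisely the defining property of $\bar{x}_k$ as minimizer of $x\mapsto\langle f'(x_k),x-x_k\rangle+g(x)$. Taking the infimum over $x\in C$ gives $\varphi^*\ge f(x_k)+\langle f'(x_k),\bar{x}_k-x_k\rangle+g(\bar{x}_k)$, i.e. $\langle f'(x_k),\bar{x}_k-x_k\rangle+g(\bar{x}_k)-g(x_k)\le\varphi^*-\varphi(x_k)=-\theta_k$. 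Substituting this back yields the recursion $\theta_{k+1}\le(1-\tau_k)\theta_k+\tau_k\varepsilon_k$, the analogue of (\ref{fx2}).

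Finally, since $\|t\tau_k(\bar{x}_k-x_k)\|\le\delta\tau_k\to0$ and $f'$ is uniformly continuous on $C$, Proposition-type reasoning via the modulus of continuity gives $\varepsilon_k\to0$; then Lemma \ref{le:convergence-tool} applied with $\eta_k=\tau_k$ forces $\theta_k\to0$, which is conclusion (ii). I expect the main obstacle—really the only nonroutine point—to be the correct handling of the term $g(\bar{x}_k)-g(x_k)$: one must resist trying to bound $g$ by differentiation (it need not be smooth) and instead use convexity of $g$ along the segment for the upper estimate and the variational characterization of $\bar{x}_k$ for the lower estimate, so that the two $g$-contributions telescope into the single descent quantity $-\theta_k$.
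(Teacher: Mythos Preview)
Your proposal is correct and follows essentially the same route as the paper's proof: both establish (i) directly from the line-search definition, then for (ii) introduce an auxiliary null sequence $\{\tau_k\}$ with divergent sum, expand $f$ via the integral mean-value formula, bound $g$ along the segment by convexity, and combine the optimality of $\bar{x}_k$ with convexity of $f$ to obtain the recursion $\theta_{k+1}\le(1-\tau_k)\theta_k+\tau_k\varepsilon_k$ to which Lemma~\ref{le:convergence-tool} applies. Your use of the minimization property of $\bar{x}_k$ directly (rather than via the subdifferential inclusion $-f'(x_k)\in\partial g(\bar{x}_k)$ that the paper writes first) is in fact slightly cleaner, since it avoids any issue with the normal cone of $C$.
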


\begin{proof}

The optimality condition for (\ref{gFWA1}) gives that
\begin{equation}\label{eq:4:g(xk)}
-f'(x_k)\in\partial g(\bar{x}_k).
\end{equation}
Then the subdifferential inequality yields:
\begin{equation}\label{eq:4:g(xk)}
\langle f'(x_k),x-\bar{x}_k\rangle\ge g(\bar{x}_k)-g(x)
\end{equation}
for all $x\in X$.

Take a null sequence $\{\tau_k\}$ in $(0,1)$ such that $\sum_{k=0}^\infty\tau_k=\infty$
and define a function $\varphi_k$ by
$$\varphi_k(\gamma):=f(x_k+\gamma(\bar{x}_k-x_k))+g(x_k+\gamma(\bar{x}_k-x_k)),\quad 0\le \gamma\le 1.$$
Then the stepsize $\gamma_k$ is given by
$$\gamma_k=\arg\min \{\varphi_k(\gamma): 0\le \gamma\le 1\}.$$
It immediately turns out that
$$\varphi(x_{k+1})=\varphi_k(\gamma_k)\le\varphi(0)=\varphi(x_k).$$
Namely, $\{\varphi(x_k)\}$ is decreasing, hence $\lim_{k\to\infty}\varphi(x_k)$ exists.
We also have
\begin{align}\label{f(x}
f(x_k+\tau_k(\bar{x}_k-x_k))
&=f(x_k)+\tau_k\langle f'(x_k),\bar{x}_k-x_k)\rangle \nonumber \\
&\quad +\tau_k\int_0^1 \langle f'(x_k+t\tau_k(\bar{x}_k-x_k))-f'(x_k),\bar{x}_k-x_k\rangle dt \nonumber\\
&\le f(x_k)+\tau_k\langle f'(x_k),\bar{x}_k-x_k)\rangle+\tau_k\varepsilon_k.
\end{align}
Here,
$$\varepsilon_k=\delta\cdot\sup_{0\le t\le 1}\|f'(x_k+t\tau_k(\bar{x}_k-x_k))-f'(x_k)\|,\quad
\delta:={\rm diam}(C).$$
Take $x^*\in S$ and set $\theta_k=\varphi(x_k)-\varphi(x^*)=f(x_k)-f(x^*)+g(x_k)-g(x^*)$.

From (\ref{gFWA1}) or (\ref{eq:4:g(xk)}), it follows that
$$\langle f'(x_k),x\rangle+g(x)\ge \langle f'(x_k),\bar{x}_k\rangle+g(\bar{x}_k).$$
This results in that
\begin{align*}
f(x)&\ge f(x_k)+\langle f'(x_k),x-x_k\rangle\\
&\ge f(x_k)+\langle f'(x_k),\bar{x}_k-x_k\rangle+g(\bar{x}_k)-g(x).
\end{align*}
In particular, we obtain by taking $x:=x^*\in S$
\begin{equation}\label{eq:3:f1}
\langle f'(x_k),\bar{x}_k-x_k\rangle\le f(x^*)+g(x^*)-f(x_k)-g(\bar{x}_k).
\end{equation}
Using (\ref{f(x}), (\ref{eq:3:f1}) and convexity of $g$, we are able to estimate $\varphi(x_{k+1})$ as follows.
\begin{align}\label{eq:3:phi1}
\varphi(x_{k+1})&\le\varphi_k(\tau_k)=f(x_k+\tau_k(\bar{x}_k-x_k))+g(x_k+\tau_k(\bar{x}_k-x_k)) \nonumber \\
&\le f(x_k)+\tau_k\langle f'(x_k),\bar{x}_k-x_k)\rangle+\tau_k\varepsilon_k
+g(x_k)+\tau_k[g(\bar{x}_k)-g(x_k)] \nonumber \\
&\le f(x_k)+\tau_k[f(x^*)+g(x^*)-f(x_k)-g(\bar{x}_k)]+\tau_k\varepsilon_k \nonumber \\
&\quad +g(x_k)+\tau_k[g(\bar{x}_k)-g(x_k)] \nonumber \\
&=f(x_k)+g(x_k)+\tau_k[f(x^*)+g(x^*)-f(x_k)-g(x_k)]+\tau_k\varepsilon_k.
\end{align}
Subtracting by $\varphi(x^*)=f(x^*)+g(x^*)$ from both sides of (\ref{eq:3:phi1}) yields
\begin{equation}\label{eq:3:theta}
\theta_{k+1}\le (1-\tau_k)\theta_k+\tau_k\varepsilon_k.
\end{equation}
Now applying Lemma \ref{le:convergence-tool} to (\ref{eq:3:theta}), we conclude that $\theta_k\to 0$; namely,
$\varphi(x_k)\to \varphi(x^*)$. This completes the proof.
\end{proof}

\subsection{Stepsize by Open Loop Rule}

Consider the generalized Frank-Wolfe algorithm (\ref{al:gFWA}) where the sequence of stepsizes $\{\gamma_k\}$ is
selected by the open loop rule (C1)-(C2) of Theorem \ref{th:open-loop-1}.

\begin{theorem}\label{th:gFWA-OpenLoop}
Consider the sequence $\{x_k\}$ generated by the generalized Frank-Wolfe algorithm (\ref{al:gFWA}).
Assume the conditions below are satisfied:
\begin{enumerate}
\item[{\rm (i)}]
the Fr\'echet derivative $f'$ is uniformly continuous over $C$;
\item[{\rm (ii)}]
the stepsizes $\{\gamma_k\}\subset (0,1]$ satisfy the open loop conditions:
\begin{itemize}
\item[(C1)]
$\lim_{k\to\infty}\gamma_k=0$,
\item[(C2)] $\sum_{k=0}^\infty\gamma_k=\infty$.
\end{itemize}
\end{enumerate}
Then $\lim_{k\to\infty}\varphi(x_k)=\varphi^*:=\inf_C\varphi$.
\end{theorem}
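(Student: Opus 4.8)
The plan is to mirror the proof of Theorem \ref{th:open-loop-1} almost verbatim, but with $f$ replaced by the composite $\varphi=f+g$ and with the extra $g$-terms handled by the subdifferential inequality coming from the optimality condition for the generalized subproblem (\ref{gFWA1}). First I would set $\theta_k=\varphi(x_k)-\varphi^*$ with $\varphi^*=\varphi(x^*)$ for some $x^*\in S$, and expand $f(x_{k+1})=f(x_k+\gamma_k(\bar{x}_k-x_k))$ exactly as in (\ref{fxk+1}) via the fundamental theorem of calculus, obtaining
\begin{equation*}
f(x_{k+1})\le f(x_k)+\gamma_k\langle f'(x_k),\bar{x}_k-x_k\rangle+\gamma_k\varepsilon_k,
\end{equation*}
where $\varepsilon_k=\delta\cdot\sup_{0\le t\le 1}\|f'(x_k+t\gamma_k(\bar{x}_k-x_k))-f'(x_k)\|$ and $\delta={\rm diam}(C)$; by uniform continuity of $f'$ and $\|t\gamma_k(\bar{x}_k-x_k)\|\le\gamma_k\delta\to 0$, we have $\varepsilon_k\to 0$, exactly as in (\ref{int}).

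Next I would incorporate the $g$-part. Using convexity of $g$ I bound
\begin{equation*}
g(x_{k+1})=g(x_k+\gamma_k(\bar{x}_k-x_k))\le (1-\gamma_k)g(x_k)+\gamma_k g(\bar{x}_k),
\end{equation*}
and then add the two estimates to get a bound on $\varphi(x_{k+1})=f(x_{k+1})+g(x_{k+1})$. The crucial step is to replace the linear term $\langle f'(x_k),\bar{x}_k-x_k\rangle+g(\bar{x}_k)$ using the optimality of $\bar{x}_k$ in (\ref{gFWA1}). Indeed, exactly as in the derivation of (\ref{eq:3:f1}), convexity of $f$ together with the optimality inequality $\langle f'(x_k),\bar{x}_k\rangle+g(\bar{x}_k)\le\langle f'(x_k),x^*\rangle+g(x^*)$ yields
\begin{equation*}
\langle f'(x_k),\bar{x}_k-x_k\rangle+g(\bar{x}_k)\le f(x^*)+g(x^*)-f(x_k).
\end{equation*}

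Substituting this into the combined estimate, the $g(\bar{x}_k)$ terms cancel and one is left with
\begin{equation*}
\varphi(x_{k+1})\le \varphi(x_k)+\gamma_k[\varphi(x^*)-\varphi(x_k)]+\gamma_k\varepsilon_k,
\end{equation*}
which after subtracting $\varphi^*$ becomes the recursion $\theta_{k+1}\le (1-\gamma_k)\theta_k+\gamma_k\varepsilon_k$, exactly parallel to (\ref{theta}) and (\ref{eq:3:theta}). Finally I would invoke Lemma \ref{le:convergence-tool}, whose hypotheses (a)--(c) are met since $\gamma_k\to 0$ and $\sum\gamma_k=\infty$ by the open loop conditions (C1)--(C2) and $\varepsilon_k\to 0$, to conclude $\theta_k\to 0$, i.e.\ $\varphi(x_k)\to\varphi^*$. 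I do not anticipate a genuine obstacle here: the only place requiring care is the bookkeeping that makes the $g(\bar{x}_k)$ terms cancel, and this is precisely where the optimality condition (\ref{eq:4:g(xk)}) for the generalized subproblem does the work that (\ref{f'xk}) did in the smooth case; the argument is otherwise a direct transcription of the proof of Theorem \ref{th:open-loop-1} combined with the $g$-handling already carried out in the proof of Theorem \ref{th:gFWA}.
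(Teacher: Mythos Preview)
Your proposal is correct and follows essentially the same approach as the paper: the paper's proof is literally described as ``minor alterations'' of the proof of Theorem~\ref{th:gFWA}, replacing the auxiliary sequence $\{\tau_k\}$ by the given stepsizes $\{\gamma_k\}$, and the resulting chain of inequalities---the integral expansion for $f(x_{k+1})$, the convexity bound $g(x_{k+1})\le(1-\gamma_k)g(x_k)+\gamma_k g(\bar x_k)$, the optimality inequality (\ref{eq:3:f1}), and the final recursion $\theta_{k+1}\le(1-\gamma_k)\theta_k+\gamma_k\varepsilon_k$ followed by Lemma~\ref{le:convergence-tool}---is exactly what you outline.
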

\begin{proof}
The proof is some minor alterations of that of Theorem \ref{th:gFWA}. As before, we set
$\theta_k=\varphi(x_k)-\varphi^*$. It is easily seen that we still have (\ref{f(x}) where
$\tau_k$ is replaced with $\gamma_k$. Namely, we have
\begin{equation}\label{eq:3:f2}
f(x_k+\gamma_k(\bar{x}_k-x_k))
\le f(x_k)+\gamma_k\langle f'(x_k),\bar{x}_k-x_k)\rangle+\gamma_k\varepsilon_k.
\end{equation}
Here
$$\varepsilon_k=\delta\cdot\sup_{0\le t\le 1}\|f'(x_k+t\gamma_k(\bar{x}_k-x_k))-f'(x_k)\|,\quad
\delta:=\sup\{\|\bar{x}_k-x_k\|: k\ge 0\}.$$
Note that we have $\varepsilon_k\to 0$ since $f'$ is uniformly continuous on $C$.

Observe that (\ref{eq:3:f1}) remains valid; observe also that (\ref{eq:3:phi1}) remains valid with $\tau_k$ substituted by
$\gamma_k$ for each $k$; that is,
\begin{align}\label{eq:3:phi2}
\varphi(x_{k+1})&=f(x_k+\gamma_k(\bar{x}_k-x_k))+g(x_k+\gamma_k(\bar{x}_k-x_k)) \nonumber \\
&\le f(x_k)+g(x_k)+\gamma_k[f(x^*)+g(x^*)-f(x_k)-g(x_k)]+\gamma_k\varepsilon_k.
\end{align}
By subtracting $\varphi(x^*)$ from both sides of (\ref{eq:3:phi2}), we get
\begin{equation}\label{eq:3:theta2}
\theta_{k+1}\le (1-\gamma_k)\theta_k+\gamma_k\varepsilon_k.
\end{equation}
So again applying Lemma \ref{le:convergence-tool}, we conclude that $\theta_k\to 0$
and this finishes the proof.
\end{proof}

\subsection{Convergence of Iterates}

The convergence results on iterates of FWA can partially be extended to gFWA.

\begin{theorem}\label{th:gFWA-iterates}
Let $X$ be a reflexive Banach space and consider
the the generalized FWA (\ref{al:gFWA}) with the stepsize sequence $\{\gamma_k\}$ selected by either the line minimization
search method in Theorem \ref{th:FWA-convergence1} or the
open loop rule (C1)-(C2) of Theorem \ref{th:open-loop-1}.
\begin{enumerate}
\item[{\rm (i)}]
If $f$ is strictly convex, then  $\{x_k\}$ converges weakly to the unique solution of (\ref{min:f+g}).
\item[{\rm (ii)}]
If $f$ is uniformly convex, then $\{x_k\}$ converges in norm to the unique solution of (\ref{min:f+g}).
\item[{\rm (iii)}]
If $C$ is compact in the norm topology, if the stepsizes $\{\gamma_k\}$ is selected by the open loop rule,
and if $\{x_k\}$ has at most finitely many cluster points, then
$\{x_k\}$ converges in norm to a solution of (\ref{min:f+g}).
\end{enumerate}

\end{theorem}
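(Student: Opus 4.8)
The plan is to follow the proof of Theorem~\ref{th:FWA-iterates} almost verbatim, adapting each step to the composite objective $\varphi=f+g$. Three facts are common to all three parts, and I would record them at the outset. First, by Theorem~\ref{th:gFWA} (line minimization) or Theorem~\ref{th:gFWA-OpenLoop} (open loop) we have $\varphi(x_k)\to\varphi^*$, so, writing $\theta_k=\varphi(x_k)-\varphi^*$, we have $\theta_k\to 0$. Second, since $X$ is reflexive and $C$ is closed, bounded and convex, $C$ is weakly compact and weakly closed. Third, $\varphi$ is convex and lower semicontinuous, hence weakly lower semicontinuous.

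For part~(i), I would first note that strict convexity of $f$ together with convexity of $g$ makes $\varphi$ strictly convex, so $\varphi$ has at most one minimizer over $C$; since $S\neq\emptyset$, there is a unique solution $x^*$. Then I run the standard subsequence argument: given any subsequence, weak compactness of $C$ extracts a further subsequence $x_{k_j}\rightharpoonup\bar x\in C$, and weak lower semicontinuity gives $\varphi(\bar x)\le\liminf_j\varphi(x_{k_j})=\varphi^*$, so $\bar x=x^*$ by uniqueness. As every subsequence has a weak cluster point equal to $x^*$, the whole sequence converges weakly to $x^*$.

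For part~(ii), uniform convexity of $f$ and convexity of $g$ give that $\varphi$ is uniformly convex with the same modulus $\delta$ as $f$, i.e.\ $\varphi$ satisfies~\eqref{eq:h}; in particular $x^*$ is the unique solution. The one place where the proof of Theorem~\ref{th:FWA-iterates}(ii) used differentiability of $f$ cannot be copied, since $\varphi$ need not be smooth. I would replace it by applying~\eqref{eq:h} at the midpoint $\tfrac12 x_k+\tfrac12 x^*\in C$, which yields
\begin{equation*}
\varphi^*\le\varphi\!\left(\tfrac12 x_k+\tfrac12 x^*\right)\le\tfrac12\varphi(x_k)+\tfrac12\varphi(x^*)-\tfrac14\,\delta(\|x_k-x^*\|),
\end{equation*}
whence $\tfrac14\,\delta(\|x_k-x^*\|)\le\tfrac12\theta_k\to 0$. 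Since $\delta$ is continuous and strictly positive on $(0,\infty)$ and $\{\|x_k-x^*\|\}$ is bounded (by $\mathrm{diam}(C)$), $\delta(\|x_k-x^*\|)\to 0$ forces $\|x_k-x^*\|\to 0$, giving norm convergence.

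For part~(iii), I would argue exactly as in Theorem~\ref{th:FWA-iterates}(iv). From~\eqref{gFWA2}, $\|x_{k+1}-x_k\|=\gamma_k\|\bar x_k-x_k\|\le\gamma_k\,{\rm diam}(C)\to 0$ by (C1). Norm compactness of $C$ makes the set $\Omega$ of strong cluster points nonempty, and each point of $\Omega$ is a solution: if $x_{k_j}\to x^{**}$ then lower semicontinuity gives $\varphi(x^{**})\le\lim_j\varphi(x_{k_j})=\varphi^*$, so $x^{**}\in S$. Assuming $\Omega=\{x_1^*,\dots,x_m^*\}$ is finite, the same $\varepsilon$-gap contradiction---choosing $\varepsilon$ as in~\eqref{eq:3:epsilon}, partitioning $\mathbb{N}$ into the clusters $N_i$, and using $\|x_{k+1}-x_k\|<\varepsilon$ eventually---forces $m=1$, so $\{x_k\}$ converges in norm to its unique cluster point, which is a solution. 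I expect the only genuinely new point to be the nonsmooth step in part~(ii); once it is replaced by the midpoint inequality above, every other step is a routine transcription of the FWA proofs, with $f$ replaced by $\varphi$ and weak lower semicontinuity of $\varphi$ in place of the differentiability and convexity estimates used there.
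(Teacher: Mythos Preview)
Your proposal is correct. Parts~(i) and~(iii) match the paper's argument essentially line for line: strict convexity of $f$ forces strict convexity of $\varphi$ and hence uniqueness of the weak cluster point, and the $\varepsilon$-gap contradiction in part~(iii) is reproduced verbatim from Theorem~\ref{th:FWA-iterates}(iv) using $\|x_{k+1}-x_k\|\to 0$.

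The one genuine divergence is in part~(ii). The paper first invokes part~(i) to obtain weak convergence $x_k\rightharpoonup x^*$, then uses the subgradient inequality
\[
\varphi(x_k)\ge \varphi(x^*)+\langle \varphi'(x^*),x_k-x^*\rangle+\delta(\|x_k-x^*\|),
\]
with $\varphi'(x^*)=f'(x^*)+g'(x^*)$ for some $g'(x^*)\in\partial g(x^*)$, and lets $k\to\infty$ so that the pairing term vanishes by weak convergence. Your midpoint argument is different and in fact more economical: from $\varphi^*\le\varphi(\tfrac12 x_k+\tfrac12 x^*)$ and the uniform convexity inequality~\eqref{eq:h} you get $\delta(\|x_k-x^*\|)\le 2\theta_k\to 0$ directly, without passing through weak convergence and without needing $\partial g(x^*)\neq\emptyset$. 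The paper's route mirrors the smooth case of Theorem~\ref{th:FWA-iterates}(ii) more closely, while yours is shorter and sidesteps any subdifferentiability issue for $g$ at $x^*$.
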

\begin{proof}
(i)
The strict convexity of $f$ implies the strict convexity of $\varphi$, hence (\ref{min:f+g})
has a unique solution, and we write the optimal solution set $S=\{x^*\}$.

Now Theorems \ref{th:gFWA} and \ref{th:gFWA-OpenLoop} assure that $x^*$ is the only weak
cluster point of $\{x_k\}$. This is equivalent to fact that $\{x_k\}$ converges weakly to $x^*$.

(ii)
First observe by (i) that $\{x_k\}$ is weakly convergent to the unique solution $x^*$ of (\ref{min:f+g}).
Now let $\delta$ be a modulus of convexity of $f$ (i.e., Eq. (\ref{eq:h}) holds for $f$).
This actually implies that $\varphi=f+g$ is also uniformly convex with the same modulus $\delta$
of convexity.  Moreover, it is not hard to find that (\ref{eq:3:f3}) turns out to be
\begin{equation}\label{eq:3:phi}
\varphi(x)\ge \varphi(y)+\langle \varphi'(y),x-y\rangle+\delta(\|x-y\|)
\end{equation}
for all $x,y\in X$, where $\varphi'(y)=f'(y)+g'(y)$. [Here $g'(y)\in\partial g(y)$ is
a subgradient of $g$ at $y$.]

Taking $x:=x_k$ and $y:=x^*\in S$ in (\ref{eq:3:phi}) yields that
\begin{equation}\label{eq:3:phi2}
\varphi(x_k)\ge \varphi(x^*)+\langle \varphi'(x^*),x_k-x^*\rangle+\delta(\|x_k-x^*\|).
\end{equation}
Since $\varphi(x_k)\to \varphi(x^*)$ and $x_k\to x^*$ weakly (hence $\langle\varphi'(x^*),x_k-x^*\rangle\to 0$),
taking the limit in (\ref{eq:3:phi2}) as $k\to\infty$,
we immediately get $\delta(\|x_k-x^*\|)\to 0$. Consequently, $x_k\to x^*$ in norm.

(iii)
This is exactly the same as the proof of part (iv) of Theorem \ref{th:FWA-iterates}
since we still have the fact that $\|x_{k+1}-x_k\|\to 0$ which follows from (\ref{gFWA2}),
the assumption $\gamma_k\to 0$, and the boundedness of $\{x_k\}\cup\{\bar{x}_k\}$.
\end{proof}

\subsection{Rate of Convergence}

In this section we discuss the rate of convergence of the generalized FWA (\ref{al:gFWA})
and again distinguish two cases of the ways of choosing the stepsizes $\{\gamma_k\}$.

\subsubsection{Stepsizes by the Line Minimization Search Method}

\begin{theorem}\label{th:rate-gFWA1}
Let $\{x_k\}$ be generated by the generalized FWA (\ref{al:gFWA}),
where the sequence of stepsizes, $\{\gamma_k\}$, is selected by the
line minimization search method:
\begin{equation}\label{eq:4:gamma}
\gamma_k=\arg\min_{\gamma\in [0,1]} \{f(x_k+\gamma(\bar{x}_k-x_k))+g(x_k+\gamma(\bar{x}_k-x_k))\}.
\end{equation}
Assume there exists $\sigma>1$ such that the curvature constant of order $\sigma$ of $f$ over $C$,
$C_f^{(\sigma)}$ defined by (\ref{curvature-order1}), is finite.
Then we have, for $k\ge 1$,
\begin{equation}
\varphi(x_k)-\varphi^*\le \frac{\theta_0}
{\left(1+\frac{1}{\sigma}\theta_0^{\frac{1}{\sigma-1}}(C^{(\sigma)}_f)^{\frac{1}{1-\sigma}}\cdot k\right)^{\sigma-1}}
=O\left(\frac{1}{k^{\sigma-1}}\right),
\end{equation}
where $\theta_0=\varphi(x_0)-\varphi^*$.

In particular, we have
\begin{itemize}
\item
If $f'$ is $\nu$-H\"older continuous with constant $L_\nu$, then
\begin{equation*}
f(x_k)-f^*\le \frac{\theta_0}
{\left(1+\frac{1}{1+\nu}\theta_0^{\frac{1}{\nu}}(L_{\nu}\delta^{1+\nu})^{-\frac{1}{\nu}}\cdot k\right)^{\nu}}
=O\left(\frac{1}{k^{\nu}}\right).
\end{equation*}
\item
If If $f'$ is Lipschitz continuous with constant $L$, then
\begin{equation*}
f(x_k)-f^*\le \frac{\theta_0}
{1+\frac{\theta_0}{2L\delta^2}\cdot k}
=O\left(\frac{1}{k}\right).
\end{equation*}
\end{itemize}
Here $\delta={\rm diam}(C)$.

\end{theorem}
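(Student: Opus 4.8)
The plan is to mirror the argument of Theorem \ref{th:FWA-rate1}, replacing the Frank-Wolfe gap $\langle f'(x_k),x_k-\bar{x}_k\rangle$ by a composite gap that absorbs the nonsmooth term $g$. First I would set
$$d_k:=\langle f'(x_k),x_k-\bar{x}_k\rangle+g(x_k)-g(\bar{x}_k),\qquad \theta_k:=\varphi(x_k)-\varphi^*,$$
and bound a single step of the algorithm. Applying the curvature-constant inequality (\ref{curvature-order2}) to $f$ along the segment $x_k\mapsto x_k+\gamma(\bar{x}_k-x_k)$, and using the convexity of $g$ in the form $g(x_k+\gamma(\bar{x}_k-x_k))\le g(x_k)+\gamma[g(\bar{x}_k)-g(x_k)]$, I obtain for every $\gamma\in[0,1]$
$$\varphi(x_k+\gamma(\bar{x}_k-x_k))\le \varphi(x_k)-\gamma d_k+\frac{\gamma^\sigma}{\sigma}C_f^{(\sigma)}.$$
The crucial structural point is that the convexity bound on $g$ is \emph{linear} in $\gamma$, so the entire $\gamma^\sigma$ contribution still comes from $f$ alone; the right-hand side therefore has exactly the shape treated in the smooth case, with $\langle f'(x_k),x_k-\bar{x}_k\rangle$ simply replaced by $d_k$.

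Next I would establish the two facts that drive the recursion, namely $d_k\ge \theta_k\ge 0$. These follow from inequality (\ref{eq:3:f1}) already derived in the proof of Theorem \ref{th:gFWA}: rearranging it gives $\langle f'(x_k),x_k-\bar{x}_k\rangle\ge f(x_k)+g(\bar{x}_k)-f(x^*)-g(x^*)$, and adding $g(x_k)-g(\bar{x}_k)$ to both sides yields $d_k\ge \varphi(x_k)-\varphi^*=\theta_k\ge 0$. With the one-step bound and $d_k\ge\theta_k$ in hand, I would minimize
$$\psi_k(\gamma):=\varphi(x_k)-\gamma d_k+\frac{\gamma^\sigma}{\sigma}C_f^{(\sigma)}$$
over $[0,1]$ exactly as in Theorem \ref{th:FWA-rate1}. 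The unconstrained stationary point satisfies $\bar{\gamma}^{\sigma-1}=d_k/C_f^{(\sigma)}$, and splitting into the cases $\bar{\gamma}\le 1$ and $\bar{\gamma}>1$ produces the two upper bounds on $d_k$ in terms of $\varphi(x_k)-\varphi(x_{k+1})$ that are the verbatim analogs of (\ref{ineq:1}) and (\ref{ineq:2}).

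To close the argument, I would note that $\theta_k\to 0$ by Theorem \ref{th:gFWA} and that $\{\theta_k\}$ is nonincreasing, so the H\"older-type (Case~1) bound eventually dominates. Combining it with $d_k\ge\theta_k$ gives the recursion
$$\theta_{k+1}\le \theta_k-\frac{\sigma-1}{\sigma}\left(C_f^{(\sigma)}\right)^{-\frac{1}{\sigma-1}}\theta_k^{\frac{\sigma}{\sigma-1}}.$$
Applying Lemma \ref{le:2:ineq} with $\eta=\frac{1}{\sigma-1}$ and $\beta=\frac{\sigma-1}{\sigma}(C_f^{(\sigma)})^{-\frac{1}{\sigma-1}}$ then yields the stated bound $\theta_k=O(1/k^{\sigma-1})$, and the $\nu$-H\"older and Lipschitz specializations follow by inserting the estimates on $C_f^{(\sigma)}$ from Corollary \ref{cor:curva-Holder}. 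The main obstacle here is conceptual rather than computational: isolating the correct composite gap $d_k$ so that the nonsmooth part enters only linearly in $\gamma$ while still retaining the lower bound $d_k\ge\theta_k$. Once $d_k$ is identified, the optimization over $\gamma$ and the final appeal to Lemma \ref{le:2:ineq} are a direct transcription of the smooth case.
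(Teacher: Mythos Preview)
Your proposal is correct and follows essentially the same approach as the paper's proof: the paper likewise bounds $\varphi(x_{k+1})$ by the surrogate $\varphi(x_k)+\gamma[\langle f'(x_k),\bar{x}_k-x_k\rangle+g(\bar{x}_k)-g(x_k)]+\frac{\gamma^\sigma}{\sigma}C_f^{(\sigma)}$, minimizes over $\gamma\in[0,1]$ with the same case split on $\bar\gamma$, uses the lower bound $\langle f'(x_k),x_k-\bar{x}_k\rangle+g(x_k)-g(\bar{x}_k)\ge\theta_k$, and finishes with Lemma \ref{le:2:ineq}. Your introduction of the shorthand $d_k$ is merely a notational convenience for the same composite gap; the only cosmetic difference is that the paper justifies the eventual dominance of the Case~1 bound directly from $\varphi(x_k)-\varphi(x_{k+1})\to 0$ (monotone bounded sequence), whereas you route through $\theta_k\to 0$ via Theorem \ref{th:gFWA}.
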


\begin{proof}
The proof given below is appropriate adaptations of the proof of Theorem \ref{th:FWA-rate1}.
We use the same notation of the errors: $\theta_k:=\varphi(x_k)-\varphi(x^*)=f(x_k)+g(x_k)-f(x^*)-g(x^*)$
with $x^*\in S$. Using (\ref{curvature-order1}) and the convexity of $g$, we deduce that
\begin{align*}
\varphi(x_{k+1})&=\min_{\gamma\in [0,1]} f(x_k+\gamma(\bar{x}_k-x_k))+g(x_k+\gamma(\bar{x}_k-x_k))\\
&\le \min_{\gamma\in [0,1]} \varphi(x_k)+\gamma\langle f'(x_k),\bar{x}_k-x_k\rangle+\frac{\gamma^\sigma}{\sigma}C_f^{(\sigma)}
 +\gamma(g(\bar{x}_k)-g(x_k))=:\varphi_k(\gamma).
\end{align*}
Solving $\bar{\gamma}$ from the equation:
$$\varphi_k'(\gamma)=\langle f'(x_k),\bar{x}_k-x_k\rangle+\gamma^{\sigma-1}C_f^{(\sigma)}+g(\bar{x}_k)-g(x_k)=0,$$
we get
\begin{equation}\label{eq:gamma^sigma}
\bar{\gamma}^{\sigma-1}=\frac{1}{C_f^{(\sigma)}}[\langle f'(x_k),x_k-\bar{x}_k\rangle
+g(x_k)-g(\bar{x}_k)]\ge 0.
\end{equation}
Notice that
$$\varphi_k(0)=\varphi(x_k),\quad \varphi_k(1)=\varphi(x_k)+\frac{1}{\sigma}C_f^{(\sigma)}
+\langle f'(x_k),\bar{x}_k-x_k\rangle+g(\bar{x}_k)-g(x_k).$$
If $\bar{\gamma}\le 1$, then using (\ref{eq:gamma^sigma}) we have
\begin{align*}
\varphi(x_{k+1})&\le\varphi_k(\bar{\gamma})
=\varphi(x_k)+\bar{\gamma}\left(\langle f'(x_k),\bar{x}_k-x_k\rangle+g(\bar{x}_k)-g(x_k)
  +\frac{\bar{\gamma}^{\sigma-1}}{\sigma}C_f^{(\sigma)}\right)\\
&=\varphi(x_k)+(1-\frac1{\sigma})\bar{\gamma}\left(\langle f'(x_k),\bar{x}_k-x_k\rangle+g(\bar{x}_k)-g(x_k)
  \right)\\
&=\varphi(x_k)+(1-\frac1{\sigma})\frac{1}{\left(C_f^{(\sigma)}\right)^{\frac{1}{\sigma-1}}}
\left(\langle f'(x_k),\bar{x}_k-x_k\rangle+g(\bar{x}_k)-g(x_k)\right).
\end{align*}
It turns out that
\begin{equation} \label{eq:f'g1}
\langle f'(x_k),x_k-\bar{x}_k\rangle+g(x_k)-g(\bar{x}_k)
\le\left(\frac{\sigma}{\sigma-1}\right)^{\frac{\sigma-1}{\sigma}}\left(C_f^{(\sigma)}\right)^{\frac{1}{\sigma}}
\left(\varphi(x_k)-\varphi(x_{k+1})\right)^{\frac{\sigma-1}{\sigma}}.
\end{equation}
If $\bar{\gamma}>1$, then from (\ref{eq:gamma^sigma}) we get
$$\langle f'(x_k),x_k-\bar{x}_k\rangle+g(x_k)-g(\bar{x}_k)>C_f^{(\sigma)}.$$
It follows that
\begin{align*}
\varphi(x_{k+1})&\le\varphi_k(1)=\varphi(x_k)
+\langle f'(x_k),\bar{x}_k-x_k\rangle+g(\bar{x}_k)-g(x_k)+\frac{1}{\sigma}C_f^{(\sigma)} \\
&\le \varphi(x_k)
-(1-\frac{1}{\sigma})\langle f'(x_k),x_k-\bar{x}_k\rangle+g(x_k)-g(\bar{x}_k).
\end{align*}
Consequently,
\begin{equation}\label{eq:f'g2}
\langle f'(x_k),x_k-\bar{x}_k\rangle+g(x_k)-g(\bar{x}_k)\le \frac{\sigma}{\sigma-1}(\varphi(x_k)-\varphi(x_{k+1})).
\end{equation}
Combining (\ref{eq:f'g1}) and (\ref{eq:f'g2}) we obtain
\begin{align}\label{eq:f'g3}
&\langle f'(x_k),x_k-\bar{x}_k\rangle+g(x_k)-g(\bar{x}_k) \nonumber\\
&\le \max\bigg\{\left(\frac{\sigma}{\sigma-1}\right)^{\frac{\sigma-1}{\sigma}}\left(C_f^{(\sigma)}\right)^{\frac{1}{\sigma}}
\left(\varphi(x_k)-\varphi(x_{k+1})\right)^{\frac{\sigma-1}{\sigma}},
\frac{\sigma}{\sigma-1}(\varphi(x_k)-\varphi(x_{k+1}))\bigg\}.
\end{align}
Since $\varphi(x_k)-\varphi(x_{k+1})\to 0$, there holds the relation:
\begin{equation}\label{eq:f'g4}
\left(\frac{\sigma}{\sigma-1}\right)^{\frac{\sigma-1}{\sigma}}\left(C_f^{(\sigma)}\right)^{\frac{1}{\sigma}}
\left(\varphi(x_k)-\varphi(x_{k+1})\right)^{\frac{\sigma-1}{\sigma}}>
\frac{\sigma}{\sigma-1}(\varphi(x_k)-\varphi(x_{k+1}))
\end{equation}
for all $k$ big enough. Therefore, we may assume that (\ref{eq:f'g4}) holds for all $k$, from which and  (\ref{eq:f'g3})
we get
\begin{equation}\label{eq:f'g5}
\varphi(x_{k+1})\le\varphi(x_k)-\frac{\sigma-1}{\sigma}\left(C_f^{(\sigma)}\right)^{-\frac{1}{\sigma-1}}
(\langle f'(x_k),x_k-\bar{x}_k\rangle+g(x_k)-g(\bar{x}_k))^{\frac{\sigma-1}{\sigma}}.
\end{equation}
Now by the convexity of $f$ and (\ref{eq:4:g(xk)}), we have, for all $x$,
\begin{align*}
f(x)&\ge f(x_k)+\langle f'(x_k),x-x_k\rangle. \\
&\ge f(x_k)+\langle f'(x_k),\bar{x}_k-x_k\rangle+g(\bar{x}_k)-g(x_k).
\end{align*}
It turns out that
\begin{align*}
\langle f'(x_k),x_k-\bar{x}_k\rangle+g(x_k)-g(\bar{x}_k) \ge \varphi(x_k)-\varphi(x).
\end{align*}
In particular,
\begin{align}\label{eq:f'g6}
\langle f'(x_k),x_k-\bar{x}_k\rangle+g(x_k)-g(\bar{x}_k) \ge \varphi(x_k)-\varphi(x^*)=\theta_k.
\end{align}
Substituting (\ref{eq:f'g6}) into (\ref{eq:f'g5}) yields that
\begin{equation}
\theta_{k+1}\le\theta_k-\frac{\sigma-1}{\sigma}\left(C_f^{(\sigma)}\right)^{{-\frac{1}{\sigma-1}}}
\theta_k^{\frac{\sigma}{\sigma-1}}.
\end{equation}
Applying Lemma \ref{le:2:ineq}, we get
$$\theta_k\le \frac{\theta_0}
{\left(1+\frac{1}{\sigma}\theta_0^{\frac{1}{\sigma-1}}(C^{(\sigma)}_f)^{\frac{1}{1-\sigma}}\cdot k\right)^{\sigma-1}}
=O\left(\frac{1}{k^{\sigma-1}}\right).$$
This ends the proof.
\end{proof}

\subsubsection{Stepsizes by Open Loop Rule}

\begin{theorem}\label{th:rate-gFWA2}
Let $C$ be a nonempty closed bounded convex subset of a Banach space $X$
and $f: X\to\mathbb{R}$ be a continuously Fr\'echet differentiable, convex function.
Assume
the curvature constant of order $\sigma$ of $f$, $C_f^{(\sigma)}$, is finite for some $\sigma\in (1,2]$.
Let $\{x_k\}$ be generated by the generalized Frank-Wolfe algorithm (\ref{al:gFWA})
with the stepsize sequence $\{\gamma_k\}\subset (0,1]$ satisfying the open loop conditions (C1) and (C2).
Then, for $k\ge 1$,
\begin{equation}\label{phi-phi*}
\varphi(x_k)-\varphi^*\le \frac{\sigma^\sigma\Delta}{(k+1)^{\sigma-1}}=O\left(\frac1{k^{\sigma-1}}\right),
\end{equation}
where
$\Delta=\max\{\varphi(x_0)-\varphi^*,\frac{1}{\sigma}C^{(\sigma)}_f\}$.
In particular, we have
\begin{itemize}
\item
If $f'$ is $\nu$-H\"older continuous, then
\begin{equation*}
\varphi(x_k)-\varphi^*\le O\left(\frac{1}{k^{\nu}}\right).
\end{equation*}
\item
If If $f'$ is Lipschitz continuous, then
\begin{equation*}
\varphi(x_k)-\varphi^*\le O\left(\frac{1}{k}\right).
\end{equation*}
\end{itemize}
\end{theorem}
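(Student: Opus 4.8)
The plan is to reduce the analysis to exactly the scalar recursion already solved in the proof of Theorem~\ref{th:sigma}, namely
\[
\theta_{k+1}\le (1-\gamma_k)\theta_k+\frac{\gamma_k^{\sigma}}{\sigma}C^{(\sigma)}_f,
\]
now with $\theta_k:=\varphi(x_k)-\varphi^*$. Once this is in hand, the auxiliary sequence $\{\beta_k\}$ of (\ref{eq:gamma-sigma}), Lemma~\ref{le:beta-sig}, and the induction $\theta_k\le\Delta\beta_k$ carry over without change and deliver the $O(1/k^{\sigma-1})$ decay asserted in (\ref{phi-phi*}); the H\"older and Lipschitz specializations then follow by substituting the estimates of Corollary~\ref{cor:curva-Holder} for $C^{(\sigma)}_f$, precisely as in Theorem~\ref{th:FWA-rate1}.

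To obtain the recursion I would first bound the two summands of $\varphi(x_{k+1})$ separately. For the smooth part, the defining inequality (\ref{curvature-order2}) of the curvature constant of order $\sigma$, applied with $y=x_{k+1}=x_k+\gamma_k(\bar{x}_k-x_k)$, gives
\[
f(x_{k+1})\le f(x_k)+\gamma_k\langle f'(x_k),\bar{x}_k-x_k\rangle+\frac{\gamma_k^{\sigma}}{\sigma}C^{(\sigma)}_f,
\]
while the convexity of $g$ together with $x_{k+1}=(1-\gamma_k)x_k+\gamma_k\bar{x}_k$ gives
\[
g(x_{k+1})\le g(x_k)+\gamma_k\big(g(\bar{x}_k)-g(x_k)\big).
\]
Adding these yields
\[
\varphi(x_{k+1})\le\varphi(x_k)+\gamma_k\big(\langle f'(x_k),\bar{x}_k-x_k\rangle+g(\bar{x}_k)-g(x_k)\big)+\frac{\gamma_k^{\sigma}}{\sigma}C^{(\sigma)}_f.
\]

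The decisive step, and the only genuinely new ingredient relative to Theorem~\ref{th:sigma}, is the linear lower bound on the progress term, which is exactly inequality (\ref{eq:f'g6}) established in the proof of Theorem~\ref{th:rate-gFWA1}:
\[
\langle f'(x_k),x_k-\bar{x}_k\rangle+g(x_k)-g(\bar{x}_k)\ge \varphi(x_k)-\varphi^*=\theta_k.
\]
This is where the composite structure enters: it is obtained by combining the convexity inequality $f(x)\ge f(x_k)+\langle f'(x_k),x-x_k\rangle$ with the optimality characterization of $\bar{x}_k$ for the subproblem (\ref{gFWA1}), namely $\langle f'(x_k),x\rangle+g(x)\ge\langle f'(x_k),\bar{x}_k\rangle+g(\bar{x}_k)$ for all $x\in C$, and then evaluating at $x=x^*\in S$. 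Feeding this bound into the displayed estimate for $\varphi(x_{k+1})$ and subtracting $\varphi^*$ produces precisely the target recursion $\theta_{k+1}\le(1-\gamma_k)\theta_k+\frac{\gamma_k^{\sigma}}{\sigma}C^{(\sigma)}_f$.

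Two routine points then remain. First, Lemma~\ref{le:convergence-tool} applied to this recursion (using (C1)-(C2) and $C^{(\sigma)}_f<\infty$) gives $\theta_k\to 0$, so $\varphi(x_k)\to\varphi^*$. Second, with $\Delta=\max\{\varphi(x_0)-\varphi^*,\tfrac1\sigma C^{(\sigma)}_f\}$ the induction $\theta_k\le\Delta\beta_k$ is immediate from the recursion together with $\beta_0=1\ge\theta_0/\Delta$ and $\tfrac1\sigma C^{(\sigma)}_f\le\Delta$, and Lemma~\ref{le:beta-sig} then bounds $\beta_k$ by $\sigma^\sigma/k^{\sigma-1}$. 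I expect no real obstacle here: the entire difficulty is packaged into reproducing (\ref{eq:f'g6}), after which the decay analysis is identical to the plain FWA case, the mild index shift appearing in (\ref{phi-phi*}) being only a matter of the bookkeeping in the induction base.
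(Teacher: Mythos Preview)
Your proposal is correct and follows essentially the same route as the paper's own proof: bound $f(x_{k+1})$ via the curvature inequality (\ref{curvature-order2}) and $g(x_{k+1})$ via convexity, invoke (\ref{eq:f'g6}) for the progress term, and thereby reproduce the scalar recursion (\ref{theta-k+1}), after which the $\{\beta_k\}$ comparison and Lemma~\ref{le:beta-sig} from Theorem~\ref{th:sigma} finish the job. The paper's argument is verbatim what you describe, including the explicit reference back to (\ref{eq:f'g6}) and to the proof of Theorem~\ref{th:sigma}.
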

\begin{proof}
Again we set $\theta_k:=\varphi(x_k)-\varphi(x^*)=f(x_k)+g(x_k)-f(x^*)-g(x^*)$
with $x^*\in S$. Using (\ref{curvature-order1}) and the convexity of $g$, we deduce that
\begin{align*}
\varphi(x_{k+1})&=f(x_{k+1})+g(x_{k+1})\\
&=f(x_k+\gamma_k(\bar{x}_k-x_k))+g(x_k+\gamma_k(\bar{x}_k-x_k))\\
&\le f(x_k)+\gamma_k\langle f'(x_k),\bar{x}_k-x_k\rangle+\frac{\gamma_k^\sigma}{\sigma}C_f^{(\sigma)}
 +g(x_k)+\gamma_k(g(\bar{x}_k)-g(x_k))\\
&=\varphi(x_k)-\gamma_k[\langle f'(x_k),x_k-\bar{x}_k\rangle+g(x_k)-g(\bar{x}_k)]
+\frac{\gamma_k^\sigma}{\sigma}C_f^{(\sigma)}.
\end{align*}
This together with (\ref{eq:f'g6}) implies
\begin{align*}
\varphi(x_{k+1})&\le \varphi(x_k)-\gamma_k(\varphi(x_k)-\varphi^*)+\frac{\gamma_k^\sigma}{\sigma}C_f^{(\sigma)}.
\end{align*}
Equivalently,
\begin{equation}\label{eq:theta-gFWA}
\theta_{k+1}\le (1-\gamma_k)\theta_k+\frac{\gamma_k^\sigma}{\sigma}C_f^{(\sigma)}.
\end{equation}
This is exactly (\ref{theta-k+1}). The estimate (\ref{phi-phi*}) therefore follows by repeating
the same argument of the proof of Theorem \ref{th:sigma}.
\end{proof}

\section{Conclusion}
\label{sec:conclusion}

We have studied FWA for solving (\ref{min:0:f}) and generalized FWA for (\ref{min:composite})
in the setting of general Banach spaces.
We have proved convergence of FWA and gFWA under two ways of choosing the stepsizes:
Line minimization search and open loop rule, under the condition that
the Fr\'echet derivative $f'$ of $f$ is uniformly continuous over $C$
(continuity of the gradient $\nabla f$ in the finite-dimensional framework).
To get rate of convergence of FWA and gFWA,
we have introduced the notion of curvature constant of order $\sigma\in (1,2]$ over $C$
and then successfully proved the $O\left(\frac{1}{k^{\nu}}\right)$ rate
of FWA and gFWA if $f'$ is $\nu$-H\"older continuous. In particular, FWA and gFWA
have at least sublinear rate $O\left(\frac{1}{k}\right)$ of convergence if $f'$ is Lipschitz continuous.
We have also studied convergence of the iterates $\{x_k\}$ of FWA and gFWA, and proved
that $\{x_k\}$ converges
(i) weakly  to a solution of (\ref{min:0:f}) and (\ref{min:composite}) if $f$ is strictly convex;
(ii) strongly to a solution of (\ref{min:0:f}) and (\ref{min:composite}) if $f$ is uniformly convex;
and (iii) strongly to a solution of (\ref{min:0:f}) and (\ref{min:composite}) if
$C$ is compact in the norm topology, the stepsizes $\{\gamma_k\}$ are selected by the open loop rule,
and $\{x_k\}$ has at most finitely many cluster points.

Since FWA and gFWA have a sublinear rate of convergence in the case where
$f'$ is Lipschitz continuous, it is an interesting problem of speeding up
the convergence rate of FWA and gFWA using Nesterov's acceleration method \cite{Ne1983,BT2009}.

A summary of the results obtained in this paper is as follows:
\begin{itemize}
\item
Uniform continuity (continuity in finite-dimensional spaces) of $f'$ on $C$
is sufficient to guarantee convergence of FWA and gFWA.

\item
Finite curvature constant of order $\sigma\in (1,2]$ of $f$ over $C$, in particular, H\"older or Lipschitz continuity of $f'$,
guarantees convergence rate of $O\left(\frac{1}{k^\tau}\right)$ of FWA and gFWA, where $\tau\in (0,1]$.

\item
Convergence of the iterates of FWA and gFWA remain more and further investigations.

\end{itemize}

\end{document}